\newcommand{\dtcs}[4]
{[\begin{smallmatrix}
  #1  & #2 \\
  #3 & #4
\end{smallmatrix}]}
\newcommand{\den}[1]{$\phantom{|^{|^|}}$#1$\phantom{|^{|^|}}$}
\newcommand{\ssz}[1]
{\mbox{ss}{\mathbb{Z}}_{#1}}
\newcommand{\tcs}[3]
{[\begin{smallmatrix}
    & #1\\
  #2 & #3
\end{smallmatrix}]}
\newcommand{\dec}[4]{\mbox{\textsf{dec}}\llbracket \begin{smallmatrix}
 1 & 3  & 5 & 7 & 9 \\
 * &  #1 & #2 & #3 & #4
\end{smallmatrix}\rrbracket}
\newcommand{\decall}[8]{\mbox{\textsf{dec}}\llbracket \begin{smallmatrix}
 1 & 2 & 3 & 4 & 5 & 6 & 7 & 8 & 9 \\
 * &  #1 & #2 & #3 & #4 & #5 & #6 & #7 & #8
\end{smallmatrix}\rrbracket}
\newcommand{\ord}{\mbox{\textsf{ord}}}
\newcommand{\dnv}{\mbox{\textsc{dnv}}}
\newcommand{\anad}{\mbox{\textsc{anad}}}
\newcommand{\aad}{\mbox{\textsc{aad}}}
\tikzset{
  node/.style = {shape=rectangle, rounded corners,
                     draw, align=center},
}
\begin{document}

\title{The Multiplicative Persistence Conjecture Is True for Odd Targets}

\author{\'Eric Brier \and Christophe Clavier\inst{2} \and Linda Gutsche\inst{1} \and David Naccache\inst{1}} 

\institute{ENS, CNRS, PSL Research University\\
  Département d'informatique, \'Ecole normale supérieure, Paris, France\\
 \and
Université de Limoges, XLIM-MATHIS, Limoges, France\\
\email{linda.gutsche@ens.psl.eu}, \email{david.naccache@ens.fr}\\
\email{eric.brier@polytechnique.org}\\
\email{christophe.clavier@unilim.fr}
}

\maketitle

\begin{abstract}
In 1973, Neil Sloane published a very short paper introducing an intriguing problem: Pick a decimal integer $n$ and multiply all its digits by each other. Repeat the process until a single digit $\Delta(n)$ is obtained. $\Delta(n)$ is called the \textsl{multiplicative digital root of $n$} or \textsl{the target of $n$}. The number of steps $\Xi(n)$ needed to reach $\Delta(n)$, called \textsl{the multiplicative persistence of $n$} or \textsl{the height of $n$} is conjectured to always be at most $11$.\smallskip

Like many other very simple to state number-theoretic conjectures, the multiplicative persistence mystery resisted numerous explanation attempts.\smallskip

This paper proves that the conjecture holds for all odd target values:
\begin{itemize}
    \item If $\Delta(n)\in\{1,3,7,9\}$, then $\Xi(n) \leq 1$
    \item If $\Delta(n)=5$, then $\Xi(n) \leq 5$\smallskip
\end{itemize}

Naturally, we overview the difficulties currently preventing us from extending the approach to (nonzero) even targets.
\end{abstract}

\section{Introduction}

In 1973, Neil Sloane published a very short paper \cite{sloane} introducing an intriguing problem: Pick a decimal integer $n$ and multiply all its digits by each other. Repeat the process until a single digit $\Delta(n)$ is obtained. $\Delta(n)$ is called the \textsl{multiplicative digital root of $n$} or \textsl{the target of $n$}. The number of steps $\Xi(n)$ needed to reach $\Delta(n)$, called \textsl{the multiplicative persistence of $n$} or \textsl{the height of $n$} is conjectured to always be at most $11$.\smallskip

For instance, the target of $39$ is $4$, because:

$$39\rightarrow 3\times9=27 \rightarrow 2\times 7= 14 \rightarrow 1\times 4=4=\Delta(39)$$ 

Like many other very simple to state number-theoretic conjectures, the multiplicative persistence mystery resisted numerous explanation attempts \cite{ref1}, \cite{ref2}, \cite{ref4}, \cite{ref5}, \cite{ref6}, \cite{ref7}.\smallskip

In particular, the conjecture is known to hold at least up to $n=10^{20000}$ \cite{ref3}.

Addressing the Multiplicative persistence conjectures consists in studying the function $f : n \rightarrow f(n)$, where $f(n)$ is obtained by multiplying the digits of the the number $n$.\smallskip

$\Xi(n)$ is hence the smallest $k$ such that $f^k(n)\leq 9$.\smallskip

Note that $\Xi(n)$ is defined for all $n\in\mathbb{N}$: letting $n=\sum_{i=0}^r 10^i a_i$ (where the $0\leq a_i\leq 9$ are digits), $f(n)=\prod_{i=0}^r a_i<a_r\times 10^r\leq n$.\smallskip

$\mathcal F_n=\{f(n),f^2(n),f^3(n),\ldots\}$ is thus a positive decreasing sequence, and as such, it converges.\smallskip

Since $\mathcal F_n$ takes values in $\mathbb{N}$, $\mathcal F_n$ can only converge by reaching a fix-point $\Delta(n)$ and staying at it. However, the $\mathcal F_n$ is strictly decreasing while its values have at least two decimal digits.\smallskip

Finally, $\mathcal F_n$ converges converges toward a one-digit number\footnote{known as ``multiplicative digital root'' or ``target''.}, $\Delta(n)\leq 9$. Hence the notion of \textsl{multiplicative persistence}\footnote{or ``height''.}
 $\Xi(n)$ defined as the number of steps required to reach $\Delta(n)$.\smallskip

The following is a famous conjecture \cite{rg}:

\begin{conjecture}
\label{conjecture-main}
$\forall n\in \mathbb{N}$, $\Xi(n)\leq 11$.\smallskip
\end{conjecture}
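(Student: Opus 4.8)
The plan is to attack the conjecture by stratifying on the target value $\Delta(n)$ and exploiting the fact that the digit-product map interacts rigidly with divisibility by $2$ and by $5$. The backbone is an observation proved by backward induction along the orbit $n, f(n), f^2(n), \dots, \Delta(n)$: since a product of integers is odd exactly when every factor is odd, if $f^{j+1}(n)$ is odd then every digit of $f^{j}(n)$ is odd, and a number all of whose digits are odd is itself odd (its last digit is odd). Hence, whenever $\Delta(n)$ is odd, every iterate $f^{j}(n)$ is odd and all of its digits lie in $\{1,3,5,7,9\}$. This already confines the whole orbit to a very thin set, which is exactly what makes the odd targets tractable and the even ones not.

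For $\Delta(n)\in\{1,3,7,9\}$ I would first rule out the digit $5$. If some iterate contained a $5$, then---all its digits being odd---its digit product would be odd and divisible by $5$, hence congruent to $5 \pmod{10}$; the same then holds for every later iterate, forcing $\Delta(n)=5$, a contradiction. So every digit of every iterate lies in $\{1,3,7,9\}$, and consequently each $f^{j}(n)$ (for $j\ge 1$), being a product of such digits, has the form $3^{a}7^{b}$. The crux is then a one-line congruence: the subgroup of $(\mathbb{Z}/20\mathbb{Z})^\times$ generated by $3$ and $7$ is exactly $\{1,3,7,9\}$, so $3^{a}7^{b}\equiv r\pmod{20}$ with $r\in\{1,3,7,9\}$. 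Writing $f^{j}(n)=20q+r$ gives $\lfloor f^{j}(n)/10\rfloor = 2q$, so the tens digit $2q \bmod 10$ is even. Since all digits must be odd, $f^{j}(n)$ cannot have two or more digits; in particular $f(n)\le 9$, i.e. $\Xi(n)\le 1$.

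For $\Delta(n)=5$ the same confinement applies, but now a $5$ is present and the iterates take the shape $5^{c}3^{a}7^{b}$ with all digits in $\{1,3,5,7,9\}$. The analogous computation modulo $20$ (and, where needed, modulo higher powers of $10$) shows that the admissible all-odd-digit numbers of this shape are severely constrained; combining these constraints with the fact that $f$ strictly decreases the number of digits until a single digit is reached, I expect to bound the orbit length by a small constant and to pin it to $5$ by a finite check of the surviving cases. I anticipate this case to be more delicate than the $\Xi(n)\le 1$ argument, since a single clean modular obstruction no longer closes it.

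The main obstacle---and the reason the full conjecture stays out of reach---is the even (nonzero) targets $2,4,6,8$. There the parity argument collapses entirely: a digit product can freely mix even and odd digits from one step to the next, the orbit is no longer trapped in a multiplicatively closed residue class, and no elementary modular invariant controls either the emerging digit pattern or the number of steps. Bounding $\Xi(n)$ uniformly in this regime is precisely the hard part, and it is where any approach along these lines currently stalls.
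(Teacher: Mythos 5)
Your treatment of the targets $\{1,3,7,9\}$ is correct, complete, and genuinely different from (and more elementary than) the paper's. The paper reduces each of these cases to exponential Diophantine equations of the form $10^{a_0}+18\sum_i c_i10^{a_i}-1=3^u7^w$ (equations 1.01, 3.01, 7.01, 9.01, 9.02) and solves them with a multi-stage modular lifting algorithm through $m_{12}$, $m_{24}$, $m_{48}$. Your observation that $\langle 3,7\rangle=\{1,3,7,9\}$ in $(\mathbb{Z}/20\mathbb{Z})^\times$, so that every $3^a7^b$ with at least two digits has an even tens digit and therefore cannot have all digits in $\{1,3,7,9\}$, disposes of all five equations in one line: it buys a human-checkable proof of $\Xi(n)\leq 1$ for these targets, exactly the kind of simplification the paper itself lists as an open direction. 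The preliminary reductions (all iterates odd, the digit $5$ excluded when $\Delta(n)\neq 5$) match the paper's observations \textbf{(P1)} and its discussion of the power of $5$.

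The genuine gap is the case $\Delta(n)=5$, which is where essentially all of the paper's work lies, and for which you offer only a plan. Your modular obstruction collapses precisely here: an odd multiple of $5$ is congruent to $5$ or $15$ modulo $20$, and in the latter case the tens digit is odd, so nothing prevents all-odd-digit numbers of the form $3^a5^c7^b$ from existing --- indeed $15,35,75,135,175,315,1575,1715,3375,59535,77175$ all occur in the tree of preimages of $5$. To ``pin it to $5$ by a finite check'' you would first need a finite list to check, i.e.\ an a priori bound on the exponents $a,b,c$ or on the number of digits of the iterates, and nothing in your argument supplies one; working ``modulo higher powers of $10$ where needed'' is not a proof but a restatement of the problem. (Your auxiliary claim that $f$ strictly decreases the number of digits is also false, e.g.\ $f(99)=81$, though you only use it informally.) The paper closes this gap by enumerating $39$ equations $2^h(10^{a_0}+18\sum_i c_i10^{a_i})+\tau_h=3^u7^w$ --- one per node of the candidate tree and per factorization into digits, the largest with $k+1=8$ unknown exponents --- and showing each has no admissible solutions beyond the listed ones via the lifting procedure with the prime moduli of Table~\ref{para}. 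Until you either reproduce such a resolution or find a stronger elementary invariant that survives divisibility by $5$, the bound $\Xi(n)\leq 5$ for $\Delta(n)=5$ remains unproved in your write-up. Like the paper, you correctly leave the even nonzero targets open, so neither argument establishes the full conjecture as stated.
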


\begin{figure}
    \centering
    \includegraphics{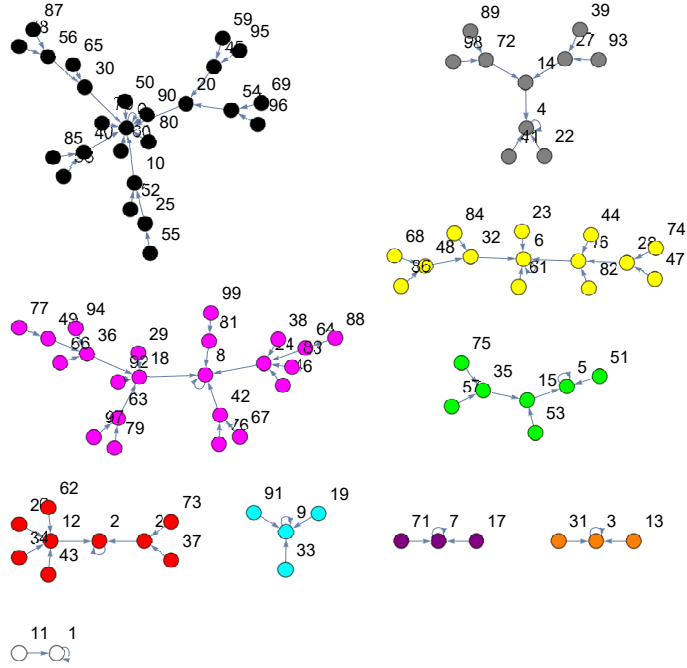}
    \caption{Genealogy for $0\leq n \leq 99$.}
    \label{fig:pers}
\end{figure}

In this work we prove the conjecture for all odd targets\footnote{i.e. $\Delta(n) \in \{1;\ 3;\ 5;\ 7;\ 9\}$} and provide bounds for $\Xi(n)$ depending on the value of $\Delta(n)$.

\subsection{Notations}
To present formulae concisely, we introduce the following compact notation:

$$\dtcs{a}{b}{c}{d}=2^a \times 3^b \times 5^c \times 7^d$$

When an exponent is zero we might just omit the corresponding entry in the notation, or replace it by a $0$ e.g.:

$$\dtcs{0}{b}{c}{d}=\tcs{b}{c}{d}= 3^b \times 5^c \times 7^d$$

$\ord_{n}(a)$ will denote the order of $a \bmod n$, i.e. the smallest positive integer $k$ such that $a^k\ \equiv\ 1 \bmod n$.\smallskip

In this paper, the term ``digit'' will exclusively refer to decimal digits.

Let $d$ be a digit, the shorthand notation $d_x$ will stand for a sequence of $x$ consecutive digits $d$, e.g.:

$$9_80_71=9999999900000001$$

To simplify notations we will denote by $\vec{x}$ a sequence of $(k+1)$ indexed variables starting with $x_0$ and ending with $x_k$, that is: $\vec{x}=\mbox{``}x_0,x_1,\ldots,x_k\mbox{''}$. Operations on vectors are to be understood component wise, e.g.:\smallskip

$$\vec{x}+7\vec{y}=\mbox{``}x_0+7y_0,x_1+7y_1,\ldots,x_k+7y_k\mbox{''}$$

Finally, we will also need the following definition:

\begin{definition}
$\dec{a}{b}{c}{d}$ denotes the set of decimal integers where the digits $3,5,7,9$ respectively appear $a,b,c,d$ times (at any position) with any number of $1$s.\smallskip
\end{definition}

The acronyms \dnv, \aad~and \anad~will respectively stand for ``\underline{d}o(es) \underline{n}ot \underline{v}erify'', ``\underline{a}re \underline{a}ll \underline{d}ifferent (from each other)'' and ``\underline{a}re \underline{n}ot \underline{a}ll \underline{d}ifferent''. e.g. ``$x=2$ \dnv~$x^2=5$'',
``\{1,2,3\} \aad'', ``$a=1$, $b=2$ and $c=1$ \anad.''

\section{Convergence Genealogies}

\textbf{(P1)} : Take $y$ an odd number. If $\exists x$ such that $f(x)=y$ then, as the product of the digits of $x$, $y$ can be written as $y = \dtcs{\alpha}{\beta}{\gamma}{\delta}$. Since $y$ is odd, $y =\tcs{\beta}{\gamma}{\delta}$. As an antecedent of $y$, $x$ belongs to one of the sets $\dec{\beta_1}{\gamma}{\delta}{\beta_2}$, with $\beta_1 + 2 \times \beta_2 = \beta$. Conversely, if $y =\tcs{\beta}{\gamma}{\delta}$ and $x \in \dec{\beta_1}{\gamma}{\delta}{\beta_2}$, with $\beta_1 + 2 \times \beta_2 = \beta$, then $f(x)=y$.\smallskip

Let us notice that if $\Delta(n)$ is odd, then $n$ has to be odd too: indeed, if $n$ would have been even, its last digit would be even as well, and thus $f(n)$ would be even, and so on.\smallskip

For a digit $d$, let us denote as tree of antecedents of $d$ the graph $A_d=(V_d,E_d)$ defined as follows:
\begin{itemize}
    \item $d \in V_d$
    \item If $s \in V_d$ and $x$ such that $f^2(x)=s$, then $f(x)\in V_d$ and $(s,f(x))\in E_d$
\end{itemize}

\textbf{(P2)} : Then, $\{n : \Delta(n)=d\} = \{n : f(n)\in V_d\}$, and $\Xi(n)$ for $n$ such that $\Delta(n)=d$ is the number of different nodes in the path connecting $d$ to $f(n)$.\smallskip

\textbf{(P1)} and \textbf{(P2)} together with the knowledge of $A_1$, $A_3$, $A_5$, $A_7$ and $A_9$ describe all numbers $n$ such that $\Delta(n)=d$ for a given odd digit $d$. We now want to prove that $A_1$, $A_3$, $A_5$, $A_7$ and $A_9$ are respectively the following graphs:

\begin{figure}
$B_1=(U_1,F_1)$ = \qquad
\begin{tikzpicture}
  [
    grow                    = right,
    sibling distance        = 4em,
    level distance          = 8em,
    edge from parent/.style = {draw, -latex},
    every node/.style       = {font=\footnotesize},
    sloped
  ]
  \node [node] {1}
  edge [loop above] ()
    ;
\end{tikzpicture}

$B_3=(U_3,F_3)$ = \qquad
\begin{tikzpicture}
  [
    grow                    = right,
    sibling distance        = 4em,
    level distance          = 8em,
    edge from parent/.style = {draw, -latex},
    every node/.style       = {font=\footnotesize},
    sloped
  ]
  \node [node] {3}
    edge [loop above] ()
    ;
\end{tikzpicture}
\\

$B_5=(U_5,F_5)$ =\\
\begin{tikzpicture}
\tikzstyle{level 1}=[sibling distance=8.5em,level distance = 6em, edge from parent/.style = {draw, -latex},every node/.style       = {font=\footnotesize},rotate=90]
  \tikzstyle{level 2}=[sibling distance=5.6em,level distance = 6em, edge from parent/.style = {draw, -latex},every node/.style       = {font=\footnotesize},minimum width=3em]
    \tikzstyle{level 3}=[sibling distance=2.8em,level distance = 7em, edge from parent/.style = {draw, -latex},every node/.style       = {font=\footnotesize},minimum width=4em]
  [
    grow                    = right,
    sibling distance        = 4em,
    level distance          = 6em,
    edge from parent/.style = {draw, -latex},
    every node/.style       = {font=\footnotesize},
    sloped
  ]
  \node [node] {5}
    child { node [node] {15}
      child { node [node] {315} 
          child { node [node] {3375} 
              child { node [node] {59535} }
          }
      }
      child { node [node] {135} }
      child { node [node] {35} 
          child { node [node] {1715} 
              child { node [node] {77175} }
          }
          child { node [node] {175} 
              child { node [node] {1575} }
          }
          child { node [node] {75} }
      }
    }
    edge [loop above] ()
    ;
\end{tikzpicture}

$B_7=(U_7,F_7)$ = \qquad
\begin{tikzpicture}
  [
    grow                    = right,
    sibling distance        = 4em,
    level distance          = 8em,
    edge from parent/.style = {draw, -latex},
    every node/.style       = {font=\footnotesize},
    sloped
  ]
  \node [node] {7}
    edge [loop above] ()
    ;
\end{tikzpicture}

$B_9=(U_9,F_9)$ = \qquad
\begin{tikzpicture}
  [
    grow                    = right,
    sibling distance        = 4em,
    level distance          = 8em,
    edge from parent/.style = {draw, -latex},
    every node/.style       = {font=\footnotesize},
    sloped
  ]
  \node [node] {9}
    edge [loop above] ()
    ;
\end{tikzpicture}
\end{figure}

\section{Establishing the Equations}
The five graphs above are respectively sub-graphs of $A_1$, $A_3$, $A_5$, $A_7$ and $A_9$. To prove that they are also equal to said graphs, we need to prove for $d\in \{1;\ 3;\ 5;\ 7;\ 9\}$ that for each $s\in U_d$, if $x$ is such that $f^2(x)=s$, then $f(x)\in U_d$.\smallskip

For example take $d=1$ and $s=1$. Let us consider $x$ such that $f^2(x)=s$. Because $1=\tcs{0}{0}{0}$, \textbf{(P1)} gives that 
$f(x) \in \dec{0}{0}{0}{0}$, meaning that $f(x)$ is only composed of $1$s. Therefore, there exists $a$ (corresponding to the number of digits of $f(x)$) such that $f(x)=\frac{10^a-1}{9}$. On the other hand, $f(x)=\tcs{\beta}{\gamma}{\delta}$, and $f(x)$ has neither $0$ nor $5$ as digits since $f^2(x)=1$, so we have $\gamma = 0$. We thus want to solve $\frac{10^a-1}{9}=\dtcs{0}{\beta}{0}{\delta}$, or equivalently: $10^a-1=\dtcs{0}{u}{0}{w}$.\\

For another example, take $d=5$ and $s=315$. Let us consider $x$ such that $f^2(x)=s$. Because $315=\tcs{2}{1}{1}$, \textbf{(P1)} gives that either $f(x) \in \dec{2}{1}{1}{0}$ or $f(x) \in \dec{0}{1}{1}{1}$. Therefore, either
\begin{itemize}
    \item there exist $a$ (corresponding to the number of digits of $f(x)$), and $b,c,d,e$ (corresponding respectively to the positions\footnote{Position 0 corresponds to the least significant digit.} of the digits $3$, $3$, $5$ and $7$ in $f(x)$) such that $f(x) = \frac{10^a-1}{9} + (3-1) \times 10^b + (3-1) \times 10^c + (5-1) \times 10^d + (7-1) \times 10^e$ (with $a>b,c,d,e\geq 0$ and $b,c,d,e$ all different because of what $a,b,c,d,e$ represent),
    \item or there exist $a$ and $b,c,d$ such that $f(x) = \frac{10^a-1}{9} + (5-1) \times 10^b + (7-1) \times 10^c + (9-1) \times 10^d$.
\end{itemize}
On the other hand, $f(x)=\tcs{\beta}{\gamma}{\delta}$. We thus want to solve
\begin{itemize}
    \item $10^a + 18 \times 10^b + 18 \times 10^c + 36 \times 10^d + 54 \times 10^e - 1 = \tcs{u}{v}{w}$
    \item and $10^a + 36 \times 10^b + 54 \times 10^c + 72 \times 10^d - 1 = \tcs{u}{v}{w}$
\end{itemize}

\subsection{About the Possible Values of $v=\gamma$, the Power of $5$}

If $d\neq 5$ is an odd digit, if $s \in U_d$ and $x$ such that $f^2(x)=s$, $f(x)=\tcs{\beta}{\gamma}{\delta}$ with $\gamma = 0$ since $f(x)$ would otherwise end with $0$ or $5$, which would make $f^k(x)=d$ for some $k$ impossible.\smallskip

For $d=5$ however, if $s \in U_5$ and $x$ such that $f^2(x)=s$, we initially only know that $f(x)=\tcs{\beta}{\gamma} {\delta}$.\smallskip

Let us look at the values $\tcs{\beta}{\gamma}{\delta}$ takes if $\gamma \geq 5$: odd multiples of $5^5$ have their values modulo $10^5$ in\footnote{This set is formed of $5^5\times (2i+1)$ for $0\leq i \leq 15$.}:

\begin{center}
\begin{tabular}{rrrrrrrrrr}
  &\{3125,&9375,&15625,&21875,&28125,&34375,&40625,&46875,\\
    &53125,&59375,&65625,&71875,&78125,&84375,&90625,&96875\}\\
\end{tabular}  
\end{center}

Therefore, if $f(x)=\tcs{\beta}{\gamma}{\delta}$ with $\gamma \geq 5$, either $f(x)=9375$ in $\mathbb{Z}$ or $f(x) \bmod 10^5 = 59375=\dtcs{0}{5}{1}{2}$. However, $f(9375)=945 \notin U_5$. Furthermore, if $f(x) \bmod 10^5 = 59375$, $f^2(x)$ is a multiple of $4725$, but $4725$ divides no element of $U_5$. From that, we know that $\gamma \leq 4$.\smallskip

Similarly, odd multiples of $5^4$ have their values modulo $10^4$ in \footnote{This set is formed of $5^4\times (2i+1)$ for $0\leq i \leq 7$.}:

$$\{625,1875,3125,4375,5625,6875,8125,9375\}$$

Thus, if $\gamma = 4$, $s=f^2(x)$ is a multiple of $f(9375)=945$. Since $59535$ is the only element of $U_5$ that $945$ divides, $s=59535$. Note that knowing $f(x) \bmod 10^4 = 9375$ gives us the positions of some digits $9,3,7,5$ in $f(x)$.\smallskip

Likewise, if $\gamma = 3$, $f(x) \bmod 10^3 = 375$ and $f(375)=105$ divides $s=f^2(x)$, so $s \in \{315, 1575,59535, 77175\}=\dtcs{0}{2}{1}{1}\times\{1, 5, 189, 245\}$.\smallskip

If $\gamma = 2$, $f(75)=35$ divides $s$, so $s \in \{35,175,315,1575,1715,59535,77175\}$.\smallskip

Furthermore, because of the values $3^{\beta} \times 7^{\delta}$ takes modulo $100$ (the tens digit is always even), $\gamma \geq 1$.

\subsection{Listing all Equations}
To summarise, let us take $d$ an odd digit, $s \in U_d$ and $x$ such that $f^2(x)=s$. Then $f(x)=\tcs{\beta}{\gamma}{\delta}$ with $\gamma = 0$ if $d \neq 5$, and $\gamma$ taking at most four different values from $\{1,2,3,4\}$ depending on $s$ if $d=5$. Also $f(x)\in \dec{h_1}{i}{j}{h_2}$, with $h_1 + 2 \times h_2 = h$ if $s=\tcs{h}{i}{j}$.\smallskip

For all $d, s$ we can range over all possible split $(h_1,h_2)$ and all possible $\gamma$ to establish a finite list of equations that $f(x)$ must satisfy. The number of equations for all possible $d$ are as follows:

\begin{center}
\begin{tabular}{|c||c|c|c|c|c|}\hline
~~value of $d$~~&~~1~~&~~3~~&~~5~~&~~7~~&~~9~~\\\hline
~~number of equations~~&~~1~~&~~1~~&~39~&~~1~~&~~2~~\\\hline
\end{tabular}
\end{center}

Notice that when $\gamma \neq 0$, the knowledge of $f(x) \bmod 10^{\gamma}$ gives the position of the $\gamma$ last digits of $f(x)$, and they are always such that we are able to divide both sides of the equation by $5^{\gamma}$.\smallskip

As a consequence, all equations can be put in the form $\mbox{\textsf{LC}}(\vec{a})=\dtcs{0}{u}{0}{w}$ where $\mbox{\textsf{LC}}(\vec{a})$ is a linear combination of $10^{a_0}$, $10^{a_1}$, \ldots, $10^{a_k}$. Precisely, they are defined by the formula:
$$2^h\times (10^{a_0}+18\sum_{i=1}^k c_i\times10^{a_i})+\tau_h=
\dtcs{0}{u}{0}{w}$$
where
{\small
\begin{center}
\begin{tabular}{|c||c|c|c|c|c|}\hline
~~~$h=\gamma$~~~&~~~$0$~~~& ~~~$1$~~~&~~~$2$~~~&~~~$3$~~~&~~~$4$~~~\\\hline
~~~$\tau_h$~~~&~~~$-1$~~~& ~~~$7$~~~&~~~$23$~~~&~~~$19$~~~&~~~$119$~~~\\\hline
\end{tabular}
\end{center}}
\noindent and where the other parameters are given in Appendix \ref{App1}.\smallskip

The solutions that interest us must verify \textbf{(R)} : $a_0>a_1,\ldots,a_k$ and $a_1,\ldots,a_k$ \aad~and non-negative. The following section describes a resolution algorithm for solving this kind of equations. It outputs a finite list of solutions from which one can verify that the only ones satisfying \textbf{(R)} correspond to values $f(x)$ belonging to $U_d$.

\section{Resolution Algorithm}

\subsection{The General Principle}
\label{sec:general-principle}

We want to solve \textbf{(E)} : $\mbox{\textsf{LC}}(\vec{a})=\dtcs{0}{u}{0}{w}$ where $\mbox{\textsf{LC}}(\vec{a})$ is a linear combination of $10^{a_0}$, $10^{a_1}$, \ldots, $10^{a_k}$, $a_0>a_1,\ldots,a_k$ and $a_1,\ldots,a_k$ \aad.\smallskip

For some $t>0$ and $y,z \leq t$, assume that we know for $(\vec{a},u,w)$ a solution of \textbf{(E)} the residues $(\vec{a'}) = (\vec{a}) \bmod t$, $u' = u \bmod \ord_{\dtcs{y}{0}{z}{0}}(3)$, and $w' = w \bmod \ord_{\dtcs{y}{0}{z}{0}}(7)$.

Because $\mbox{\textsf{LC}}(\vec{a})$ is a linear combination of $10^{a_0},10^{a_1},\ldots,10^{a_k}$, the knowledge of $(\vec{a}) \bmod t$ gives at most $2^{k+1}$ possible values for $\mbox{\textsf{LC}}(\vec{a}) \bmod \dtcs{y}{0}{z}{0}$. Indeed, since $a_i=a'_i+q_i \times t$ with $0 \leq a'_i < t$, we have
$$10^{a_i} = 10^{a'_i} \times (10^t)^{q_i} \equiv 10^{a'_i} \times (0)^{q_i} \bmod{\dtcs{y}{0}{z}{0}}.$$
Thus, for each $i$, $10^{a_i} \bmod \dtcs{y}{0}{z}{0}$ can take only two values: either $10^{a'_i}$ (in which case $a_i$ is known in $\mathbb{Z}$) or $0$.\smallskip

Note that the knowledge of $u' = u \bmod \ord_{\dtcs{y}{0}{z}{0}}(3)$ and $w' = w \bmod \ord_{\dtcs{y}{0}{z}{0}}(7)$ gives us $\dtcs{0}{u}{0}{w} \bmod \dtcs{y}{0}{z}{0}$.\smallskip

Considering the equation \textbf{(E)} modulo $\dtcs{y}{0}{z}{0}$, we are able, for each $(\vec{b}) = (b_0,\ldots,b_k) \in \{0,1\}^{k+1}$, to compute $\textsf{LC}(\vec{a}) \bmod \dtcs{y}{0}{z}{0} = \sum_i b_i 10^{a'_i} \bmod \dtcs{y}{0}{z}{0}$ and check whether it is equal to $3^{u'} \times 7^{w'} \bmod \dtcs{y}{0}{z}{0}$.

We want $y$ and $z$ to be high enough such that the congruence modulo $\dtcs{y}{0}{z}{0}$ is satisfied for only a few $(\vec{b})$. This set of complying solutions can then be further reduced. Indeed, for each surviving $(\vec{b})$ one can check whether the partial set of known $a_i$ ($a_i$ is known in $\mathbb{Z}$ for all $i$ such that $b_i=1$) also complies with requirement \textbf{(R)}. The hope is that each tuple $(\vec{a'},u',w')$ we may start from, either yields a uniquely determined ($b_i=1$ for all $i$) solution $f(x)$ belonging to $U_d$, or no solution at all.




\subsection{Application}

Let us e.g. chose $(t,y,z)=(12,9,6)$. $\ord_{\dtcs{9}{0}{6}{0}}(3) = 
\dtcs{7}{0}{5}{0}$ and $\ord_{
\dtcs{9}{0}{6}{0}}(7) = \dtcs{6}{0}{4}{0}$.\smallskip

Let us notice that $10^{12} - 1 = \dtcs{0}{3}{0}{1} \times 11 \times 13 \times 37 \times 101 \times 9901$. Denote $m_{12} := \frac{10^{12} - 1}{3^3 \times 7} = 11 \times 13 \times 37 \times 101 \times 9901$. $m_{12}$ has the two following interesting properties:
\begin{itemize}
    \item $\ord_{m_{12}}(10)=12$
    \item and $(\dtcs{0}{u'}{0}{w'} \equiv 1 \bmod{m_{12}}) \iff
    \left \{\begin{array}{l}
u' \equiv 0 \bmod{9900}\\
w' \equiv 0 \bmod{900}
\end{array}
\right.$.

\end{itemize}

Assume that $(\vec{a},u,w)$ is a solution of \textbf{(E)}. Let us look at all $0 \leq a'_1, a'_2, \ldots a'_k < 12$ such that $\mbox{\textsf{LC}}(\vec{a'}) \equiv 3^{u'} \times 7^{w'} \bmod{m_{12}}$ for some $0 \leq u' < 9900$ and $0 \leq w' < 900$. Thanks to the two properties of $m_{12}$ mentioned here above, we can thus reduce the possible values of $(\vec{a}) \bmod 12$ and get matching values for $u \bmod 9900$ and $w \bmod 900$.\smallskip

At this point we have a set of possible values for $(\vec{a}) \bmod 12$, and for each of them we know the corresponding values of  $u' = u \bmod (11 \times \dtcs{2}{2}{2}{\phantom{0}})$ and $w' = w \bmod \dtcs{2}{2}{2}{\phantom{0}}$. Remember that our goal is to know $u$ and $w$ modulo $\dtcs{7}{0}{5}{0}$ and $\dtcs{6}{0}{4}{0}$ respectively. To improve our knowledge of $u$ and $w$ our strategy is to lift from equation \textbf{(E)} considered modulo $m_{12}$ to \textbf{(E)} considered modulo $m_{24} := \frac{10^{24} - 1}{3^3 \times 7} = m_{12} \times 73 \times 137 \times 9_40_31$.
\smallskip

For each value $(\vec{a})$ can take modulo 12, we obtain $2^{k+1}$ different values for $(\vec{a}) \bmod 24$. Consider $p = 73$ a prime divisor of $m_{24}$ and observe that $\ord_{73}(3)=\dtcs{2}{1}{0}{0}$ and $\ord_{73}(7)=\dtcs{3}{1}{0}{0}$. Since $\ord_{73}(7)=\dtcs{3}{1}{0}{0}$ we can improve our knowledge of $w$ from $w \bmod \dtcs{2}{2}{2}{\phantom{0}}$ to $w \bmod \dtcs{3}{2}{2}{\phantom{0}}$. This is due to the principle described in Section~\ref{sec:improving-knowledge-u-w}. Considering other prime divisors of $m_{24}$ progressively further improves our knowledge of $u$ and $w$. As this knowledge is not yet sufficient, we eventually have to lift upper from $m_{24}$ to $m_{48} := \frac{10^{48} - 1}{3^3 \times 7}$ and exploit other prime divisors of $m_{48}$ up to obtaining our sufficient knowledge of $u \bmod \dtcs{7}{0}{5}{0}$ and $w \bmod \dtcs{6}{0}{4}{0}$.
\smallskip





\subsection{Improving our knowledge of $u$ and $w$}
\label{sec:improving-knowledge-u-w}

Suppose that we know the value of $\mbox{\textsf{LC}}(\vec{a}) \bmod p$. Suppose that we have $m_u$ and $m_w$ such that we know of $0 \leq u' < m_u$ and $0 \leq w' < m_w$ verifying $u = u' + h_u m_u$ and $w = w' + h_w m_w$. Suppose that $q$, $n_u=\lambda \times m_u$ and $n_w=\mu \times m_w$ are such that $\ord_p(3)$ divides $q \times n_u$ and $\ord_p(7)$ divides $q \times n_w$.\smallskip

We are searching for $0 \leq u'' < n_u$ and $0 \leq w'' < n_w$ such that $u = u'' + k_u n_u$ and $w = w'' + k_w n_w$ for some $k_u$ and $k_w$. Let us write $h_u = h'_u + c \times \lambda$.\smallskip
$$u = u' + h_u m_u = u' + h'_u m_u + c \lambda m_u = u' + h'_u m_u + c n_u$$

$u''$ is therefore of the form $u' + h'_u m_u$ for some $0 \leq h'_u < \lambda$. Similarly, $w''$ is of the form $w' + h'_w m_w$ for some $0 \leq h'_w < \mu$.\smallskip

Looking at $(\mbox{\textbf{E}})^q$ $\bmod p$ gives us\smallskip

$\begin{aligned}
    (\mbox{\textsf{LC}}(\vec{a}))^q &= (3^{u} \times 7^{w})^q=\dtcs{0}{u}{0}{w}^q=\dtcs{0}{u'' + k_u n_u}{0}{w'' + k_w n_w}^q= \dtcs{0}{u' + h'_u m_u + k_u n_u}{0}{w' + h'_w m_w + k_w n_w}^q\\
        & = \dtcs{0}{u' + h'_u m_u}{0}{w' + h'_w m_w}^q \times \dtcs{0}{k_u q n_u}{0}{ k_w q n_w}
        \equiv (3^{u' + h'_u m_u} \times 7^{w' + h'_w m_w})^q \bmod{p}
\end{aligned}$


since $\ord_p(3)$ divides $q \times n_u$ and $\ord_p(7)$ divides $q \times n_v$.\smallskip

We thus have a way to search for $0 \leq h'_u < \lambda$ and $0 \leq h'_w < \mu$ to transform our knowledge of $u \bmod m_u$ and $w \bmod m_w$ into the knowledge of $u \bmod n_u$ and $w \bmod n_w$.

\subsection{The Equation resolution algorithm}

Let us start with Algorithm~\ref{aMod12} to find $(\vec{a}) \bmod 12$ and the matching $u \bmod 9900$ and $w \bmod 900$.

\begin{algorithm}
\caption{Finding $(\vec{a'}) = (\vec{a}) \bmod 12$, $u' = u \bmod 9900$ and $w' = w \bmod 900$}\label{aMod12}
\begin{algorithmic}[1]
\Procedure{find}{$g,t$}\Comment{Dichotomy to find whether $g$ has matching $u'$ and $w'$}
    \State{$d \gets 0$}
    \State{$f \gets$length($t$)-1}
    \While{$d \leq f$}
        \State{$m \gets (d+f) / 2$}
        \State{$(y,u',w') \gets t[m]$}
        \If{$y=g$}
            \State{\textbf{return} (\textsf{True},$u',w'$)}
        \ElsIf{$g>y$}
            \State{$d \gets m+1$}
        \Else
            \State{$f \gets m-1$}
        \EndIf
    \EndWhile
    \State{ \textbf{return} (\textsf{False},$0,0$)}
\EndProcedure
\Procedure{aMod12}{\textsf{LC}}
    \State{$m_{12} \gets \frac{10^{12} - 1}{3^3 \times 7}$}
    \State{leftModM12 $\gets [\ ]$} 
    \ForAll{$(u',w') \in \llbracket 0;9900 - 1 \rrbracket \times \llbracket 0;900 - 1 \rrbracket$}
        \State{$g \gets 3^{u'} \times 7^{w'} \bmod m_{12}$}
        \State{leftModM12.append(($g,u',w'$))}
    \EndFor
    \State{leftModM12.sort()}
    \State{$S_{12} \gets \emptyset$}
    \ForAll{$(\vec{a'}) \in \llbracket 0;11 \rrbracket^{k+1}$}\Comment{non-optimal}
        \State{$g \gets \mathsf{LC}(\vec{a'}) \bmod m_{12}$}
        \State{found,$u'$,$w'$ $\gets$  \textsc{find}($g$,leftModM12)}
        \If{found}
            \State{$S_{12} \gets S_{12} \cup \{(\vec{a'},u',w')\}$}
        \EndIf
    \EndFor
    \State \textbf{return} $S_{12}$
\EndProcedure
\end{algorithmic}
\end{algorithm}

Notice that trying all $(\vec{a'}) \in \llbracket 0;11 \rrbracket^{k+1}$ is not always necessary: indeed, some tuple may be equivalent except for ordering, and it is possible to treat only one of those by adding conditions such as $a'_1 < a'_2$ for example.\\
\\
For Algorithm~\ref{upgradeuw}, let us make the following assumptions:
\begin{itemize}
    \item we know $(\vec{a}) \bmod t$ where $t$ is a multiple of $\ord_p(10)$ and we hence know $g = \mbox{\textsf{LC}}(\vec{a}) \bmod p$
    \item we know $u \bmod m_u$ and $w \bmod m_w$
    \item $q$, $n_u=\lambda \times m_u$ and $n_w=\mu \times m_w$ are such that $\ord_p(3)$ divides $q \times n_u$ and $\ord_p(7)$ divides $q \times n_w$
\end{itemize}

\begin{algorithm}
\caption{Improving our knowledge of $u$ and $w$}\label{upgradeuw}
\begin{algorithmic}[2]
\Procedure{updgradeuw}{$p,g,u',w',m_u,m_w,\lambda,\mu,q$}
    \State{$R' \gets \emptyset$}
    \ForAll{$h'_u \in \llbracket 0 ; \lambda - 1 \rrbracket$}
        \ForAll{$h'_w \in \llbracket 0 ; \mu - 1 \rrbracket$}
            \If{$g^q \bmod p = (3^{u' + h'_u m_u} \times 7^{w' + h'_w m_w})^q \bmod p$}
                \State{$R' \gets R' \cup \{u' + h'_u m_u,w' + h'_w m_w\}$}
            \EndIf
        \EndFor
    \EndFor
    \State \textbf{return} $R'$
\EndProcedure
\end{algorithmic}
\end{algorithm}

Algorithm~\ref{upgradeuw} used with $g = \mbox{\textsf{LC}}(\vec{a}) \bmod p$ for one candidate value of $(\vec{a})$ modulo $t$ returns the matching possible values for $u \bmod \lambda m_u$ and $w \bmod \mu m_w$.\smallskip



To gain knowledge of $u \bmod \dtcs{7}{0}{5}{0}$ and $w \bmod \dtcs{6}{0}{4}{0}$, we follow the path described in Algorithm~\ref{learn} where $\mbox{\textsf{Learn}}(i)$ denotes the operation of using the parameters of line $i$ of Table \ref{para} to learn the possible matches of $u,w$ modulo the last two columns of Table \ref{para}.
\smallskip

\begin{algorithm}
\caption{Learning Path}\label{learn}
\begin{algorithmic}[2]
    \State{Use \textsc{aMod12} to get:}
    \State{~~~~~~~~~~~~~~~~~~~~~~~~~~~~~~$\vec{a}_{12}=(\vec{a}) \bmod 12$}
    \State{~~~~~~~~~~~~~~~~~~~~~~~~~~~~~~and the matching $u \bmod 9900$ and $w\bmod 900$}
    \State{Transform the $\vec{a}_{12}$ and matching $u$ and $w$ into a $2^{k+1}$ bigger set of possible:}
    \State{~~~~~~~~~~~~~~~~~~~~~~~~~~~~~~$\vec{a}_{24}=(\vec{a}) \bmod 24$}
    \State{~~~~~~~~~~~~~~~~~~~~~~~~~~~~~~and the matching $u \bmod 9900$ and $w\bmod 900$}
    \ForAll{$\vec{a}_{24}$ and matching $u \bmod 9900$ and $w\bmod 900$ candidates:}
        \State{$\mbox{\textsf{Learn}}(1)$}
        \State{$\mbox{\textsf{Learn}}(2)$}
        \State{$\mbox{\textsf{Learn}}(3)$} 
        \ForAll{$\vec{a}_{48}$ resulting from lifting from modulo $m_{24}$ to modulo $m_{48}$}
        \State{$\mbox{\textsf{Learn}}(4)$}
        \State{$\mbox{\textsf{Learn}}(5)$}
        \EndFor
    \EndFor
\end{algorithmic}
\end{algorithm}

\begin{table}[]
\begin{center}
\begin{tabular}{|c|c|c|c|c|c|c|}\hline
~~~$i$~~~&\den{$p_i$}&\den{$q_i$}&\den{$\lambda_i$}&\den{$\mu_i$}&\den{learn $u~\bmod$}&\den{learn $w~\bmod$}\\[0.1cm]\hline\hline
\den{$1$}&\den{$73$}&\den{$1$}&\den{$1$}&\den{$2$}&\den{$\dtcs{2}{2}{2}{\phantom{0}}\times 11$} &\den{$\dtcs{3}{2}{2}{\phantom{0}}$}\\[0.1cm]\hline
\den{$2$}&\den{$137$}&\den{$17$}&\den{$2$}&\den{$1$}&\den{$
    \dtcs{3}{2}{2}{\phantom{0}} \times 11$}&\den{$\dtcs{3}{2}{2}{\phantom{0}}$}\\[0.1cm]\hline 
\den{$3$}&\den{$9_40_31$}&\den{$11\times 101$}&\den{$5^2$}&\den{$2$}&  \den{$\dtcs{3}{2}{4}{\phantom{0}}\times 11$}&\den{$\dtcs{4}{2}{2}{\phantom{0}}$}\\[0.1cm]\hline
\den{$4$}&\den{$17$}&\den{$1$}&\den{$2$}&\den{$1$}&\den{$\dtcs{4}{2}{4}{\phantom{0}}\times 11$} &\den{$\dtcs{4}{2}{2}{\phantom{0}}$}\\[0.1cm]\hline  
\den{$5$}&\den{$9_80_71$}&\den{$\frac{p_5-1}{14_20_4}$}&\den{$2^3 \times 5$}&\den{$2^4 \times  5^2$} &\den{$\dtcs{7}{2}{5}{\phantom{0}}\times 11$} &\den{$\dtcs{8}{2}{4}{\phantom{0}}$}\\[0.1cm]\hline
\end{tabular}
\end{center}
\caption{Parameters for function $\mbox{\textsf{Learn}}(i)$}
\label{para}
\end{table}

This uses the following facts:
\vspace{-0.5cm}
\subsubsection{~~~~Fact 1:} cf. Table \ref{tab:template}.
\vspace{-0.2cm}
\begin{table}[!ht]
\begin{center}
\begin{tabular}{|c|c|c|c|}\hline
~~~$i$~~~&\den{$p_i$} &\den{$\ord_{p_i}(3)$}&\den{$\ord_{p_i}(7)$}\\[0.1cm]\hline\hline
~~~$1$~~~&\den{$73$}&\den{$2^2 \times 3$}&\den{$2^3 \times 3$}\\[0.1cm]\hline
~~~$2$~~~&\den{$137$}&\den{$2^3 \times 17$}&\den{$2^2 \times 17$}\\[0.1cm]\hline
~~~$3$~~~&\den{$9_40_31$}&\den{$\dtcs{3}{1}{4}{\phantom{0}}\times 11 \times 101$}& \den{$\dtcs{4}{2}{2}{\phantom{0}} \times 11$}\\[0.1cm]\hline
~~~$4$~~~&\den{$17$}&\den{$2^4$}&\den{$2^4$}\\[0.1cm]\hline
~~~$5$~~~&\den{$9_80_71$}&\den{$\dtcs{7}{1}{8}{\phantom{0}} \times 11 \times 73 \times 101 \times 137$}&\den{$2 \times 3 \times \ord_{p_5}(3)$}\\[0.1cm]\hline
\end{tabular}
\end{center}
\caption{Parameter values for the learning steps.}
\label{tab:template}
\end{table}

\subsubsection{~~~~Fact 2:}~~~$ m_{12} :=\frac{10^{12} - 1}{3^3 \times 7} = 11 \times 13 \times 37 \times 101 \times 9901$
\vspace{-0.2cm}
\subsubsection{~~~~Fact 3:}~~~$ m_{24} :=\frac{10^{24} - 1}{3^3 \times 7} = m_{12} \times p_1 \times p_2 \times p_3$\smallskip

Hence $\ord_p(10)$ divides $24$ for $p \in \{p_1, p_2, p_3\}$.
\vspace{-0.2cm}

\subsubsection{~~~~Fact 4:}~~~$ m_{48} := \frac{10^{48} - 1}{3^3 \times 7} = m_{24} \times p_4\times p_5 \times 5882353$\smallskip

Hence $\ord_p(10)$ divides $48$ for $p \in \{p_4,p_5\}$.


\bigskip
After the last \textsf{Learn}(5) phase we have solutions to \textbf{(E)} of the form $(\vec{a}_{48},u',w')$ where $\vec{a}_{48} = \vec{a} \bmod 48$, $u' = u \bmod (\dtcs{7}{2}{5}{\phantom{0}}\times 11)$ and $w' = w \bmod \dtcs{8}{2}{4}{\phantom{0}}$. Since $\ord_{\dtcs{9}{0}{6}{0}}(3) = \dtcs{7}{0}{5}{0}$ divides $\dtcs{7}{2}{5}{\phantom{0}}\times 11$ and $\ord_{\dtcs{9}{0}{6}{0}}(7) = \dtcs{6}{0}{4}{0}$ divides $\dtcs{8}{2}{4}{\phantom{0}}$, we can now consider equation \textbf{(E)} modulo $\dtcs{9}{0}{6}{0}$. Denote $\vec{a'} = \vec{a}_{48} \bmod 12$ with $\vec{a} = \vec{a'} + \vec{h'} \times 12$. As explained in Section~\ref{sec:general-principle}, when taken modulo $\dtcs{9}{0}{6}{0}$ each term $10^{a_i}$ of \textsf{LC}($\vec{a})$ may reduce either to $10^{a'_i}$ or to 0 depending whether $h'_i = 0$ or not. The first case corresponds to $a_i = a'_i < 12$ and $a_i$ is known in $\mathbb{Z}$, while the second case corresponds to $a_i \geq 12$ and $a_i$ is not uniquely determined.\smallskip

Given $\vec{a'}$, $u'$ and $w'$ Algorithm~\ref{toZ} exhausts all possible alternatives for each $a_i$ (whether $a_i < 12$ or not) and checks the congruence modulo $m = \dtcs{9}{0}{6}{0}$. For each solution that satisfies the congruence, if it returns $(a'_i,1)$ it means that $a_i$ is not known in $\mathbb{Z}$, but if it returns $(a'_i,0)$ it is known that $a_i = a'_i$. Then the set of known $a_i$ can be considered to check whether it violates the requirement \textbf{(R)}.


\begin{algorithm}
\caption{Getting $(\vec{a})$ in $\mathbb{Z}$}\label{toZ}
\begin{algorithmic}[2]
\Procedure{$\mbox{to}\mathbb{Z}$}{\textsf{LC},$\vec{a'},u',w'$}
    \State{$m \gets 2^{9} \times 5^6$}
    \State{$R \gets \emptyset$}
    \ForAll{$\vec{b} \in \{0,1\}^{k+1}$}
        \If{\textsf{LC}$(\vec{a'}+\vec{b} \times 12) \bmod m = 3^{u'} \times 7^{w'} \bmod m$}
            \State{$R \gets R \cup \{((a_0,b_0),(a_1,b_1),\ldots,(a_k,b_k),u',w')\}$}
        \EndIf
    \EndFor
    \State \textbf{return} $R$
\EndProcedure
\end{algorithmic}
\end{algorithm}

\section{Dealing with Nonzero Even Targets}

Having computationally demonstrated Conjecture~\ref{conjecture-main} for odd targets, a natural question that arises is whether it is possible to demonstrates the same for $d \in \{2;4;6;8\}$\footnote{The particular case $d=0$ can not be treated by our method.}. In this section we give general research directions about the computational difficulty of this task and provide shortcuts and observations that may be useful to reduce the complexity for anyone tempted to take over the challenge.\smallskip

Given $d$ and the graph $B_d=(U_d,F_d)$, we have to prove, for each $s \in U_d$, that if $f^2(x)=s$ then $f(x) \in U_d$. Given $s$, we have to solve as many different equations as the number of ways to express $s$ as a product of digits. As an example, for $d=2$ and $s=112$, expressing $s$ as the product $4 \times 4 \times 7$ leads to the following equation: $10^{a_0} + 27 \times 10^{a_1} + 27 \times 10^{a_2} + 54 \times 10^{a_3} = 2^t \times 3^u \times 7^w$.
The number of vertices in $U_d$ and the total number of equations they produce are thus parameters related to the difficulty of proving the conjecture for $d$. Nevertheless, all equations are not equally difficult to solve as we have to exhaust all $(k+1)$-tuples $(a_0,\ldots,a_k)$. The number $k$ of terms when expressing $s$ as a product of digits is thus particularly important. Consequently, the largest $k$ value, which relates to the computational work to solve the most difficult equation, is a more relevant parameter than the number of vertices or equations.\smallskip

Table~\ref{table-difficulty-parameters} gives for each even target the number of vertices in $U_d$, the total number of equations to solve, and the maximal number of terms $(k+1)$ for the left part of an equation. One can notice that for each even $d$, the number of equations to solve and the maximal $(k+1)$ to deal with are much more important than for odd targets (the maximal value of $(k+1)$ was merely equal to 8 for $d=5$ and $s=59535$). Given that each $a_i$ is defined modulo $12$, it seems totally out of reach to exhaust $12^{30}\simeq 2^{107}$ tuples in the most favorable case. Though, $d=2$ and $d=4$ are the most promising targets in terms of difficulty as there seems to be a gap with $d=6$ and $d=8$ both regarding the number of equations and its maximal difficulty.\smallskip

Should one want to prove the conjecture for $d=2$ or $d=4$, whose graphs $B_2$ and $B_4$ are given in Appendix~\ref{app-U2-U4}, we do have a number of technical observations (omitted here for lack of space) allowing to noticeably reduce the complexity of the exhaustive search needed in the first phase. Section~\ref{sec-power-of-two} considers the power of two that appears in the right part of the equations with a hope to reduce its negative impact on the filtering strength of the first phase.\smallskip

\begin{table}
\begin{center}
\begin{tabular}{|c|c|c|c|}\hline
~~Target~~&~~Number of~~&~~Total number~~&~~$(k+1)$ of the most~~\\
~~$d$~~&~~vertices~~&~~of equations~~&~~difficult equation~~\\[0.1cm]\hline\hline
~~2~~&~~33~~&~~1117~~&\den{30, for $s=\dtcs{26}{3}{0}{0}$}\\[0.1cm]\hline
~~4~~&~~9~~&~~1062~~&\den{32, for $s=\dtcs{23}{7}{0}{1}$}\\[0.1cm]\hline
~~6~~&~~84~~&~~6377~~&\den{37, for $s=\dtcs{24}{6}{0}{6}$}\\[0.1cm]\hline
~~8~~&~~51~~&~~4774~~&\den{45, for $s=\dtcs{39}{3}{0}{2}$}\\[0.1cm]\hline
\end{tabular}
\end{center}
\caption{Complexity parameters for $d \in \{2;4;6;8\}$.}
\label{table-difficulty-parameters}
\end{table}


\subsection{About the Possible Values of the Power of $2$}
\label{sec-power-of-two}

Given $d \in \{2;4;6;8\}$ and $s \in U_d$, we want to prove that if there exists $x$ such that $f(f(x))=s$ then $f(x) \in U_d$. Let $n_2,n_3\ldots,n_9$ such that $s=\prod_{i=2}^9 i^{n_i}$ and $\mathcal{A} := \decall{n_2}{n_3}{n_4}{n_5}{n_6}{n_7}{n_8}{n_9}$. Then $f(x)$ must simultaneously belong to $\mathcal{A}$ and be of the form $\dtcs{t}{u}{0}{w}$.

Compared to the case of odd $d$, the presence of the new power of $2$ -- which is a priori unbounded -- may result in a much weaker filter. Nevertheless we have noticed that for all $\mathcal{A}$ of interest to this paper, the power of two of all $x \in \mathcal{A}$ is actually bounded. This motivates the following conjecture.

\begin{conjecture}
Let $\mathcal{A} := \decall{n_2}{n_3}{n_4}{n_5}{n_6}{n_7}{n_8}{n_9}$ for some integers $n_2,\ldots,n_9$. Then there exists $a$ such that the maximum power of two in any $x \in \mathcal{A}$ is $2^a$.\smallskip
\end{conjecture}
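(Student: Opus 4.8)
The plan is to transport the statement into the $2$-adic integers $\mathbb{Z}_2$ and close by a compactness argument, the guiding idea being that the arbitrarily many $1$s one is allowed to insert converge $2$-adically to the fixed non-integer $-\frac19$, while the finitely many prescribed digits can only ever contribute an integer. Throughout, $v_2$ denotes the $2$-adic valuation, and ``the maximum power of two in $x$'' means $2^{v_2(x)}$, so the claim is that $\sup_{x \in \mathcal{A}} v_2(x)$ is finite (and hence attained).

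First I would fix $\mathcal{A} := \decall{n_2}{n_3}{n_4}{n_5}{n_6}{n_7}{n_8}{n_9}$ once and for all, so that the number $N := n_2 + n_3 + \cdots + n_9$ of non-$1$ digits and the multiset of offsets $\{e_j\}$ — namely $n_i$ copies of $i-1$ for each $i \in \{2,\ldots,9\}$ — are constants of the problem. Writing each digit $d$ as $1 + (d-1)$, every $x \in \mathcal{A}$ with $\ell$ digits then satisfies $x = \frac{10^{\ell}-1}{9} + \sum_{j=1}^{N} e_j\, 10^{p_j}$, where the repunit $\frac{10^\ell-1}{9}$ collects the $1$s and $p_1,\ldots,p_N$ are the distinct positions of the non-$1$ digits, with each $e_j \in \{1,\ldots,8\}$ fixed as part of the multiset.

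Next I would assume, for contradiction, that $v_2$ is unbounded on $\mathcal{A}$ and choose $x_t \in \mathcal{A}$ with $v_2(x_t) \ge t$, so that $x_t \to 0$ in $\mathbb{Z}_2$. Since $x_t \ge 2^{t}$, the digit-length $\ell_t \to \infty$, whence $\frac{10^{\ell_t}-1}{9} \to -\frac19$ in $\mathbb{Z}_2$ (because $10^{\ell_t} \to 0$ and $9$ is a $2$-adic unit). The key step is to tame the positions by compactness: for each of the finitely many indices $j$, the sequence $(p_j^{(t)})_t$ either has a bounded subsequence, along which after extraction it is eventually a constant $p_j^\ast$, or tends to $\infty$; performing $N$ successive extractions splits $\{1,\ldots,N\} = J_{\mathrm{fin}} \sqcup J_\infty$. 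For $j \in J_\infty$ we have $v_2(e_j 10^{p_j^{(t)}}) = v_2(e_j) + p_j^{(t)} \to \infty$, so that term dies $2$-adically, while for $j \in J_{\mathrm{fin}}$ the term tends to the fixed rational integer $e_j 10^{p_j^\ast}$. Taking the $2$-adic limit of the displayed identity yields $0 = -\frac19 + \sum_{j \in J_{\mathrm{fin}}} e_j\, 10^{p_j^\ast}$, i.e. $\sum_{j \in J_{\mathrm{fin}}} e_j 10^{p_j^\ast} = \frac19$; but the left-hand side is a non-negative ordinary integer while $\frac19 \notin \mathbb{Z}$, a contradiction. Thus $v_2$ is bounded on $\mathcal{A}$, and being integer-valued it attains its maximum, which is the desired $2^a$.

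The main obstacle is entirely the compactness bookkeeping in the key step — guaranteeing that after finitely many extractions every prescribed digit either settles at a fixed low position or escapes to infinity, so that the $2$-adic limit of $x_t$ is a genuine $-\frac19 + (\text{integer})$ with only finitely many surviving terms. Everything else is immediate: the repunit alone supplies the forbidden $-\frac19$, and the finiteness of $N$ is exactly what prevents one from ``spending'' infinitely many digits to manufacture the missing $\frac19$. I would also record the $\mathbb{Z}_2$-free paraphrase — for each fixed $m$ the congruence $2^m \mid x$ constrains the low-order non-$1$ digits, and letting $m \to \infty$ a pigeonhole argument freezes a finite digit pattern and reproduces the same contradiction — as reassurance that no genuine $p$-adic machinery is actually required.
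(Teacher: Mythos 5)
Your proof is correct, and it is worth stressing that the paper offers \emph{no} proof of this statement: it is stated as a conjecture, the authors explicitly write that they do not prove it, and the only related result is Lemma~\ref{lemma-bound}, a finite verification criterion that certifies a bound $a$ for a \emph{specific} $\mathcal{A}$ once a suitable $e$ has been found by search. Your $2$-adic compactness argument settles the general statement. The decomposition $x=\frac{10^{\ell}-1}{9}+\sum_{j=1}^{N}e_j\,10^{p_j}$ is exact for every $x\in\mathcal{A}$, since by definition all digits of $x$ lie in $\{1,\ldots,9\}$ and writing each digit as $1+(d-1)$ leaves only the $N$ fixed offsets $e_j=d_j-1$; the subsequence extraction is legitimate because a sequence of non-negative integers that does not tend to infinity has a bounded, hence (by pigeonhole) eventually constant, subsequence, and only finitely many extractions are needed; and the resulting limit identity $0=-\frac19+\sum_{j\in J_{\mathrm{fin}}}e_j\,10^{p_j^{\ast}}$ forces a non-negative rational integer to equal $\frac19$, which is absurd. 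The two approaches are complementary: yours is fully general but non-effective (it yields no value of $a$), whereas the paper's Lemma~\ref{lemma-bound} is effective but conditional on the search over $e$ terminating, which the paper does not establish in general. One small point to make explicit if you write this up: for each $t$ you should fix an arbitrary labelling of the $N$ non-$1$ digit occurrences of $x_t$ consistent with the fixed multiset of offsets, so that the $N$ position sequences $(p_j^{(t)})_t$ are genuinely well-defined objects before you begin extracting subsequences.
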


We do not provide any proof of this conjecture, but Lemma~\ref{lemma-bound} allows to determine such bound $a$ for some specific sets $\mathcal{A}$.



\begin{lemma}
Let $\mathcal{A} := \decall{n_2}{n_3}{n_4}{n_5}{n_6}{n_7}{n_8}{n_9}$ for some integers $n_2,\ldots,n_9$. Let $e$ be a positive integer and define $\mathcal{C}_e = \mathcal{A}_e \cup \mathcal{B}_e$ where $\mathcal{A}_e$ is the set of all $x \in \mathcal{A}$ whose number of digits is less than $e$, and where $\mathcal{B}_e$ is the set of all integers whose number of digits is equal to $e$, which do not contain $0$, and whose number of occurrences $n'_j$ of digit $j$ is at most $n_j$ for $2 \leq j \leq 9$.\\
If $\mathcal{C}_e$ does not contain any integer divisible by $2^e$, then the same holds for $\mathcal{A}$.
\label{lemma-bound}
\end{lemma}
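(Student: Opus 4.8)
The plan is to prove the contrapositive: I will show that if some $x \in \mathcal{A}$ is divisible by $2^e$, then $\mathcal{C}_e$ already contains an integer divisible by $2^e$. The single observation driving everything is that divisibility by $2^e$ depends only on the $e$ least significant decimal digits of a number. Indeed, writing $x = q \cdot 10^e + r$ with $r = x \bmod 10^e$ and using $2^e \mid 10^e$, one gets $x \equiv r \pmod{2^e}$, so that $2^e \mid x \iff 2^e \mid r$, where $r$ is exactly the integer spelled by the last $e$ digits of $x$.

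First I would dispose of the short case. If $x \in \mathcal{A}$ has fewer than $e$ digits, then by definition $x \in \mathcal{A}_e \subseteq \mathcal{C}_e$, so $x$ is itself the required witness and there is nothing more to do.

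Next, for $x \in \mathcal{A}$ with at least $e$ digits, I set $y := x \bmod 10^e$, the integer formed by the $e$ least significant digits of $x$, and check that $y \in \mathcal{B}_e$. Since every digit of $x$ is nonzero (elements of $\mathcal{A}$ contain no $0$), the $e$-th least significant digit is nonzero, hence $y$ has exactly $e$ digits and contains no $0$. Moreover the multiset of digits of $y$ is a sub-multiset of the digits of $x$, so each digit $j \in \{2,\dots,9\}$ occurs at most $n_j$ times in $y$, i.e. $n'_j \leq n_j$. Therefore $y \in \mathcal{B}_e \subseteq \mathcal{C}_e$, and by the last-$e$-digits observation $2^e \mid x \iff 2^e \mid y$. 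Combining the two cases, any $x \in \mathcal{A}$ divisible by $2^e$ produces an element of $\mathcal{C}_e$ divisible by $2^e$, which is precisely the contrapositive of the statement.

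There is essentially no hard step: the content is conceptual rather than computational. The only point requiring care is the bookkeeping that verifies $y \in \mathcal{B}_e$ — specifically that truncating $x$ to its last $e$ digits can neither create a leading zero (guaranteed because $\mathcal{A}$ has no zero digits) nor raise the count of any digit $j \geq 2$ above $n_j$ (guaranteed because $y$'s digits are literally a sub-block of $x$'s). The practical value of the lemma is that it replaces an unbounded search over $\mathcal{A}$ by the finite search over $\mathcal{C}_e$, which is what makes it usable to certify a bound $a$ on the power of two.
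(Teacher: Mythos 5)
Your proof is correct and is essentially the paper's own argument, merely phrased as the contrapositive: both split into the same two cases (fewer than $e$ digits versus at least $e$ digits), both reduce the second case to $x \bmod 10^e \in \mathcal{B}_e$, and both rest on the fact that $2^e \mid 10^e$ makes divisibility by $2^e$ depend only on the last $e$ digits. Your version is slightly more explicit than the paper's in justifying why the truncation lands in $\mathcal{B}_e$, but the substance is identical.
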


\begin{proof}
Let $x \in \mathcal{A}$. If the number of digits of $x$ is less than $e$, then $x \in \mathcal{A}_e$, and thus is not divisible by $2^e$. In the other case, $x_0 = x \bmod 10^e$ necessarily belongs to $\mathcal{B}_e$. As the sum of $x_0$ which is not divisible by $2^e$ and $(x-x_0)$ which is divisible by $10^e$, $x$ is thus not divisible by $2^e$.
\qed
\end{proof}

Thus, if we can find the smallest $e$ (if it exists) for which $\mathcal{B}_e$ does not contain any integer divisible by $2^e$, then $2^{e-1}$ is the maximal power of two in any $x \in \mathcal{A}$.\smallskip

As an example, consider $\mathcal{A} = \decall{0}{0}{2}{0}{0}{1}{0}{0}$ which corresponds to the multi-set $\{4,4,7\}$ of interest for $d=2$ and $s=112$. Lemma~\ref{lemma-bound} is verified for $e=8$ and not verified for any $e < 8$, thus $2^7$ is the maximal power of two in any $x \in \mathcal{A}$ (indeed $2^7$ divides 111744).\smallskip

Table~\ref{table-bound} gives the maximal power of two $2^a$ in any $x \in \mathcal{A}$ for all multi-sets that arise when still considering $d=2$ and $s=112$.\smallskip

\begin{table}
\begin{center}
\begin{tabular}{|c|c|c|}\hline
~~multi-set~~&~~$a$~~&\den{representative reaching divisibility by $2^a$}\\\hline\hline
~~(2,2,2,2,7)~~&~~13~~&\den{$172122112=21011\times 2^{13}$}\\\hline
~~(2,2,4,7)~~&~~15~~&\den{$211111411712=17\times 378977\times 2^{15}$}\\\hline
~~(4,4,7)~~&~~7~~&\den{$111744=3^2 \times 97 \times 2^7$}\\\hline
~~(4,7,8)~~&~~9~~&\den{$1178112=3\times 13\times 59\times 2^9$}\\\hline
\end{tabular}
\end{center}
\caption{Maximal powers of two arising for all multi-sets related to $d=2$ and $s=112$.}
\label{table-bound}
\end{table}

\section{In Conclusion}

Finally, solving the equations shows what we wanted: the graphs we gave, $B_1$, $B_3$, $B_5$, $B_7$ and $B_9$ are indeed the trees of pre-images of respectively $1$, $3$, $5$, $7$ and $9$. As seen above, this gives for $d\in \{1,3,5,7,9\}$ the form of all numbers $n$ such that $\Delta(n) = d$: those are the elements of

$$\bigcup\limits_{v=\tcs{\alpha}{\beta}{\gamma}\in V_d} \bigcup\limits_{\alpha_1 + 2 \alpha_2 = \alpha} \dec{\alpha_1}{\beta}{\gamma}{\alpha_2} $$

We also got the following optimal bounds:
\begin{itemize}
    \item If $\Delta(n)\in\{1,3,7,9\}$, then $\Xi(n) \leq 1$
    \item If $\Delta(n)=5$, then $\Xi(n) \leq 5$
\end{itemize}

It follows that the Multiplicative Persistence conjecture is proved for all odd targets and, in addition, $\Xi(2n+1)\leq5$ rather than $\Xi(n)\leq11$ in the general case.

\section{Further Research}

A natural question is the applicability of our strategy to even targets. Indeed, if successful, this will settle definitely the multiplicative persistence enigma. As is, the method that we just applied would require a prohibitive amount of calculations although according to our estimates, tackling those cases would be within the reach of Grover's algorithm on a quantum computer. Three other natural extension directions would be the simplification of the proofs provided in this paper (in case more elementary arguments could be used to reach the same results), the extension of our techniques to non-decimal bases as well as their generalization to the ``Erd\H{o}s-variant'' of the conjecture mentioned in \cite{rg}.


\bibliographystyle{alpha}
\bibliography{mers}

\newpage
\appendix
\section{Appendix: Parameters for Equations}
\label{App1}

{\small
\begin{center}
    \begin{tabular}{|c|c||c|c|c|c|c|c|c|c|c|} \hline
$~~\Delta(n)~~$& $~~\mbox{\textsf{Eq ID}}~~$  &~~~$h$~~~&~~~$c_1$~~~&~~~$c_2$~~~&~~~$c_3$~~~& ~~~$c_4$~~~&~~~$c_5$~~~&~~~$c_6$~~~&~~~$c_7$~~~&~~~$\tau_h$~~~\\\hline
 
1&~~1.01~~  &~~~$0$~~~&~~~ ~~~&~~~ ~~~&~~~ ~~~& ~~~ ~~~&~~~ ~~~&~~~ ~~~&~~~ ~~~&~~~$-1$~~~\\\hline

3&~~3.01~~  &~~~$0$~~~&~~~1~~~&~~~ ~~~&~~~ ~~~& ~~~ ~~~&~~~ ~~~&~~~ ~~~&~~~ ~~~&~~~$-1$~~~\\\hline

7&~~7.01~~  &~~~$0$~~~&~~~3~~~&~~~ ~~~&~~~ ~~~& ~~~ ~~~&~~~ ~~~&~~~ ~~~&~~~ ~~~&~~~$-1$~~~\\\hline

9&~~9.01~~  &~~~$0$~~~&~~~1~~~&~~~1~~~&~~~ ~~~& ~~~ ~~~&~~~ ~~~&~~~ ~~~&~~~ ~~~&~~~$-1$~~~\\\hline

9&~~9.02~~  &~~~$0$~~~&~~~4~~~&~~~ ~~~&~~~ ~~~& ~~~ ~~~&~~~ ~~~&~~~ ~~~&~~~ ~~~&~~~$-1$~~~\\\hline

5&~~5.01~~  &~~~$1$~~~&~~~ ~~~&~~~ ~~~&~~~ ~~~& ~~~ ~~~&~~~ ~~~&~~~ ~~~&~~~ ~~~&~~~$7$~~~\\\hline

5&~~5.02~~  &~~~$1$~~~&~~~$1$~~~&~~~ ~~~&~~~ ~~~& ~~~ ~~~&~~~ ~~~&~~~ ~~~&~~~ ~~~&~~~$7$~~~\\\hline

5&~~5.03~~  &~~~$1$~~~&~~~$3$~~~&~~~ ~~~&~~~ ~~~& ~~~ ~~~&~~~ ~~~&~~~ ~~~&~~~ ~~~&~~~$7$~~~\\\hline

5&~~5.04~~  &~~~$2$~~~&~~~ ~~~&~~~ ~~~&~~~ ~~~& ~~~ ~~~&~~~ ~~~&~~~ ~~~&~~~ ~~~&~~~$23$~~~\\\hline

5&~~5.05~~  &~~~$1$~~~&~~~$1$~~~&~~~$2$~~~&~~~ ~~~& ~~~ ~~~&~~~ ~~~&~~~ ~~~&~~~ ~~~&~~~$7$~~~\\\hline

5&~~5.06~~  &~~~$1$~~~&~~~$1$~~~&~~~$1$~~~&~~~$1$~~~& ~~~ ~~~&~~~ ~~~&~~~ ~~~&~~~ ~~~&~~~$7$~~~\\\hline

5&~~5.07~~  &~~~$1$~~~&~~~$1$~~~&~~~$4$~~~&~~~ ~~~& ~~~ ~~~&~~~ ~~~&~~~ ~~~&~~~ ~~~&~~~$7$~~~\\\hline

5&~~5.08~~  &~~~$1$~~~&~~~$2$~~~&~~~$3$~~~&~~~ ~~~& ~~~ ~~~&~~~ ~~~&~~~ ~~~&~~~ ~~~&~~~$7$~~~\\\hline

5&~~5.09~~  &~~~$2$~~~&~~~$2$~~~&~~~ ~~~&~~~ ~~~& ~~~ ~~~&~~~ ~~~&~~~ ~~~&~~~ ~~~&~~~$23$~~~\\\hline 

5&~~5.10~~  &~~~$1$~~~&~~~$1$~~~&~~~$1$~~~&~~~$3$~~~& ~~~ ~~~&~~~ ~~~&~~~ ~~~&~~~ ~~~&~~~$7$~~~\\\hline 

5&~~5.11~~  &~~~$2$~~~&~~~$1$~~~&~~~$1$~~~&~~~~~~& ~~~ ~~~&~~~ ~~~&~~~ ~~~&~~~ ~~~&~~~$23$~~~\\\hline 

5&~~5.12~~  &~~~$3$~~~&~~~$1$~~~&~~~ ~~~&~~~~~~& ~~~ ~~~&~~~ ~~~&~~~ ~~~&~~~ ~~~&~~~$19$~~~\\\hline 

5&~~5.13~~  &~~~$1$~~~&~~~$3$~~~&~~~$4$~~~&~~~ ~~~& ~~~ ~~~&~~~ ~~~&~~~ ~~~&~~~ ~~~&~~~$7$~~~\\\hline 

5&~~5.14~~  &~~~$2$~~~&~~~$4$~~~&~~~ ~~~&~~~ ~~~& ~~~ ~~~&~~~ ~~~&~~~ ~~~&~~~ ~~~&~~~$23$~~~\\\hline 

5&~~5.15~~  &~~~$1$~~~&~~~$1$~~~&~~~$1$~~~&~~~$2$~~~& ~~~$3$~~~&~~~ ~~~&~~~ ~~~&~~~ ~~~&~~~$7$~~~\\\hline 

5&~~5.16~~  &~~~$2$~~~&~~~$1$~~~&~~~$1$~~~&~~~$2$~~~& ~~~ ~~~&~~~ ~~~&~~~ ~~~&~~~ ~~~&~~~$23$~~~\\\hline 

5&~~5.17~~  &~~~$3$~~~&~~~$1$~~~&~~~$2$~~~&~~~~~~& ~~~ ~~~&~~~ ~~~&~~~ ~~~&~~~ ~~~&~~~$19$~~~\\\hline 

5&~~5.18~~  &~~~$1$~~~&~~~$2$~~~&~~~$3$~~~&~~~$4$~~~& ~~~ ~~~&~~~ ~~~&~~~ ~~~&~~~ ~~~&~~~$7$~~~\\\hline 

5&~~5.19~~  &~~~$2$~~~&~~~$2$~~~&~~~$4$~~~&~~~~~~& ~~~ ~~~&~~~ ~~~&~~~ ~~~&~~~ ~~~&~~~$23$~~~\\\hline 

5&~~5.20~~  &~~~$1$~~~&~~~$3$~~~&~~~$3$~~~&~~~$3$~~~& ~~~ ~~~&~~~ ~~~&~~~ ~~~&~~~ ~~~&~~~$7$~~~\\\hline 

5&~~5.21~~  &~~~$2$~~~&~~~$3$~~~&~~~$3$~~~&~~~~~~& ~~~ ~~~&~~~ ~~~&~~~ ~~~&~~~ ~~~&~~~$23$~~~\\\hline 

5&~~5.22~~  &~~~$1$~~~&~~~$1$~~~&~~~$1$~~~&~~~$1$~~~& ~~~$2$~~~&~~~$2$~~~&~~~ ~~~&~~~ ~~~&~~~$7$~~~\\\hline 

5&~~5.23~~  &~~~$1$~~~&~~~$1$~~~&~~~$2$~~~&~~~$2$~~~& ~~~$4$~~~&~~~~~~&~~~ ~~~&~~~ ~~~&~~~$7$~~~\\\hline

5&~~5.24~~  &~~~$1$~~~&~~~$1$~~~&~~~$1$~~~&~~~$1$~~~& ~~~$1$~~~&~~~$1$~~~&~~~$3$~~~&~~~ $3$~~~&~~~$7$~~~\\\hline

5&~~5.25~~  &~~~$2$~~~&~~~$1$~~~&~~~$1$~~~&~~~$1$~~~& ~~~$1$~~~&~~~$1$~~~&~~~$3$~~~&~~~ ~~~&~~~$23$~~~\\\hline

5&~~5.26~~  &~~~$3$~~~&~~~$1$~~~&~~~$1$~~~&~~~$1$~~~& ~~~$1$~~~&~~~$3$~~~&~~~~~~&~~~ ~~~&~~~$19$~~~\\\hline

5&~~5.27~~  &~~~$1$~~~&~~~$1$~~~&~~~$1$~~~&~~~$1$~~~& ~~~$3$~~~&~~~$3$~~~&~~~$4$~~~&~~~ ~~~&~~~$7$~~~\\\hline

5&~~5.28~~  &~~~$2$~~~&~~~$1$~~~&~~~$1$~~~&~~~$1$~~~& ~~~$3$~~~&~~~$4$~~~&~~~~~~&~~~ ~~~&~~~$23$~~~\\\hline

5&~~5.29~~  &~~~$3$~~~&~~~$1$~~~&~~~$1$~~~&~~~$3$~~~& ~~~$4$~~~&~~~~~~&~~~~~~&~~~ ~~~&~~~$19$~~~\\\hline

5&~~5.30~~  &~~~$4$~~~&~~~$1$~~~&~~~$1$~~~&~~~$3$~~~& ~~~ ~~~&~~~ ~~~&~~~~~~&~~~ ~~~&~~~$119$~~~\\\hline

5&~~5.31~~  &~~~$1$~~~&~~~$1$~~~&~~~$3$~~~&~~~$3$~~~& ~~~$4$~~~&~~~$4$~~~&~~~~~~&~~~ ~~~&~~~$7$~~~\\\hline

5&~~5.32~~  &~~~$2$~~~&~~~$1$~~~&~~~$3$~~~&~~~$4$~~~& ~~~$4$~~~&~~~~~~&~~~~~~&~~~ ~~~&~~~$23$~~~\\\hline

5&~~5.33~~  &~~~$3$~~~&~~~$3$~~~&~~~$4$~~~&~~~$4$~~~& ~~~ ~~~&~~~~~~&~~~~~~&~~~ ~~~&~~~$19$~~~\\\hline

5&~~5.34~~  &~~~$4$~~~&~~~$3$~~~&~~~$4$~~~&~~~ ~~~& ~~~ ~~~&~~~~~~&~~~~~~&~~~ ~~~&~~~$119$~~~\\\hline

5&~~5.35~~  &~~~$1$~~~&~~~$1$~~~&~~~$1$~~~&~~~$2$~~~&~~~$3$~~~&~~~$3$~~~&~~~$3$~~~&~~~ ~~~&~~~$7$~~~\\\hline

5&~~5.36~~  &~~~$2$~~~&~~~$1$~~~&~~~$1$~~~&~~~$2$~~~&~~~$3$~~~&~~~$3$~~~&~~~~~~&~~~ ~~~&~~~$23$~~~\\\hline

5&~~5.37~~  &~~~$3$~~~&~~~$1$~~~&~~~$2$~~~&~~~$3$~~~&~~~$3$~~~&~~~~~~&~~~~~~&~~~ ~~~&~~~$19$~~~\\\hline

5&~~5.38~~  &~~~$1$~~~&~~~$2$~~~&~~~$3$~~~&~~~$3$~~~&~~~$3$~~~&~~~$4$~~~&~~~~~~&~~~ ~~~&~~~$7$~~~\\\hline

5&~~5.39~~  &~~~$2$~~~&~~~$2$~~~&~~~$3$~~~&~~~$3$~~~&~~~$4$~~~&~~~~~~&~~~~~~&~~~ ~~~&~~~$23$~~~\\\hline

\end{tabular}
\end{center}
}

\section{Appendix: Solutions}

For simplicity, we call \textsl{``Set of solutions of the equation of $E$ in $\mathbb{Z}$''} a set $\ssz{E}$ of tuples such that if $(\vec{a},u,w)$ is a solution in $\mathbb{Z}$ of $E$ verifying
\begin{center}
    \textit{(Property to avoid having to deal with equivalent solutions:) }\\
    
\[
\mbox{if~} (i<j \mbox{~and~} c_i = c_j) \mbox{~then~} (\exists (a'_i,a'_j) \in \llbracket 0; 11\rrbracket ^2)
    \mbox{~s.t.~} \left\{
    \begin{array}{l}
        a'_i \geq a'_j\mbox{~and}   \\
        (a_i,a_j) = (a'_i,a'_j) \bmod 12  \\
        \end{array}\right.
  \]

\end{center} then $\exists(u',w')\in \mathbb{Z}^2$ such that $(\vec{a},u',w') \in \ssz{E}$ and $u = u' \bmod \ord_{\dtcs{y}{0}{z}{0}}(3)$ and $w = w' \bmod \ord_{\dtcs{y}{0}{z}{0}}(7)$. Indeed, such an $\ssz{E}$ is what the algorithm returns: we do not look for the exact values of $u,w$ in $\mathbb{Z}$ since we do not need them, and we do not verify that all candidate tuples actually match a solution in $\mathbb{Z}$.

\subsection{Solving Equation 1.01}
The algorithm returns the following $\ssz{1.01}$; 

\begin{center}
\begin{tabular}{|c|c|c|}\hline
\textbf{~~element of $\ssz{3.01}$~~}   & \textbf{~~interpretation~~} & \textbf{~~conclusion~~}\\\hline\hline
\textcolor{blue}{((1), 2, 0)} &\textcolor{blue}{$f(x)=1$}&\textcolor{blue}{$\checkmark$}\\\hline
\end{tabular}
\end{center}

\subsection{Solving Equation 3.01}
The algorithm returns the following $\ssz{3.01}$:

\begin{center}
\begin{tabular}{|c|c|c|}\hline
\textbf{~~element of $\ssz{3.01}$~~}   & \textbf{~~interpretation~~} & \textbf{~~conclusion~~}\\\hline\hline
\textcolor{blue}{((1, 0), 3, 0)} &\textcolor{blue}{$f(x)=3$}&\textcolor{blue}{$\checkmark$}\\\hline
((1, 1), 3, 1) &~~\dnv~$a_0>a_1$~~& dismissed\\\hline
\end{tabular}
\end{center}

\subsection{Solving Equation 7.01}
The algorithm returns the following $\ssz{7.01}$:

\begin{center}
\begin{tabular}{|c|c|c|}\hline
\textbf{~~element of $\ssz{7.01}$~~}   & \textbf{~~interpretation~~} & \textbf{~~conclusion~~}\\\hline\hline
\textcolor{blue}{((1, 0), 2, 1)} &\textcolor{blue}{$f(x)=7$}&\textcolor{blue}{$\checkmark$}\\\hline
\end{tabular}
\end{center}

\subsection{Solving Equation 9.01}
The algorithm returns $\ssz{9.01}=\emptyset$.

\subsection{Solving Equation 9.02}
The algorithm returns the following $\ssz{9.02}$:

\begin{center}
\begin{tabular}{|c|c|c|}\hline
\textbf{~~element of $\ssz{9.02}$~~}   & \textbf{~~interpretation~~} & \textbf{~~conclusion~~}\\\hline\hline
\textcolor{blue}{((1, 0), 4, 0)} &$\textcolor{blue}{f(x)=9}$&\textcolor{blue}{$\checkmark$}\\\hline
((1, 1), 6, 0) &~~\dnv~$a_0>a_1$~~& dismissed\\\hline
\end{tabular}
\end{center}

\subsection{5.$xy$ Equations with no Solutions}
The algorithm returns $\ssz{5.xy}=\emptyset$ for the equations:
\textbf{
\begin{center}
 \begin{tabular}{cccccccccc}
~~5.13~~&~~5.14~~&~~5.16~~&~~5.17~~&~~5.25~~&~~5.26~~&~~5.30~~&~~5.33~~&~~5.34~~\\ 
\end{tabular}  
\end{center}}

\subsection{Solving Equation 5.01}
The algorithm returns the following $\ssz{5.01}$:

\begin{center}
\begin{tabular}{|c|c|c|}\hline
\textbf{~~element of $\ssz{5.01}$~~}   & \textbf{~~interpretation~~} & \textbf{~~conclusion~~}\\\hline\hline
\textcolor{blue}{((0), 2, 0)} & \textcolor{blue}{$f(x)=5$} &\textcolor{blue}{$\checkmark$}\\\hline
\textcolor{blue}{((1), 3, 0)} & \textcolor{blue}{$f(x)=15$} &\textcolor{blue}{$\checkmark$}\\\hline
\end{tabular}
\end{center}

$f(x)=5$ and $f(x)=15$ are thus the only $f(x)$ such that $f^2(x)=5$.

\subsection{Solving Equation 5.02}

The algorithm returns the following $\ssz{5.02}$:

\begin{center}
\begin{tabular}{|c|c|c|}\hline
\textbf{~~element of $\ssz{5.02}$~~}   & \textbf{~~interpretation~~} & \textbf{~~conclusion~~}\\\hline\hline
\textcolor{blue}{((1, 0), 2, 1)} & \textcolor{blue}{$f(x)=35$} &\textcolor{blue}{$\checkmark$}\\\hline
\textcolor{blue}{((2, 0), 5, 0)} & \textcolor{blue}{$f(x)=135$} &\textcolor{blue}{$\checkmark$}\\\hline
\textcolor{blue}{((2, 1), 4, 1)}& \textcolor{blue}{$f(x)=315$} &\textcolor{blue}{$\checkmark$}\\\hline
\end{tabular}
\end{center}

$35,135,315$, are thus the only $f(x)$ such that $f^2(x)=15$.

\subsection{Solving Equation 5.03}
The algorithm returns the following $\ssz{5.03}$:

\begin{center}
\begin{tabular}{|c|c|c|}\hline
\textbf{~~element of $\ssz{5.03}$~~}   & \textbf{~~interpretation~~} & \textbf{~~conclusion~~}\\\hline\hline
\textcolor{blue}{((3, 1), 2, 3)} & \textcolor{blue}{$f(x)=1715$} &\textcolor{blue}{$\checkmark$}\\\hline
\end{tabular}
\end{center}

\subsection{Solving Equation 5.04}
The algorithm returns the following $\ssz{5.04}$:

\begin{center}
\begin{tabular}{|c|c|c|}\hline
\textbf{~~element of $\ssz{5.04}$~~}   & \textbf{~~interpretation~~} & \textbf{~~conclusion~~}\\\hline\hline
\textcolor{blue}{((0), 3, 0)} & \textcolor{blue}{$f(x)=75$} &\textcolor{blue}{$\checkmark$}\\\hline
\textcolor{blue}{((1), 2, 1)} & \textcolor{blue}{$f(x)=175$} &\textcolor{blue}{$\checkmark$}\\\hline
\end{tabular}
\end{center}

These two solutions, together with solution of Equation 5.03 give us that, if $f^2(x)=35$, then $f(x)\in\{75,175,1715\}$.

\subsection{Solving Equation 5.05}
The algorithm returns the following $\ssz{5.05}$:

\begin{center}
\begin{tabular}{|c|c|c|}\hline
\textbf{~~element of $\ssz{5.05}$~~}   & \textbf{~~interpretation~~} & \textbf{~~conclusion~~}\\\hline\hline
((0, 1, 0), 2, 2) &~~\dnv~$a_0>a_1,a_2\geq 0$ with $a_1 \neq a_2$~~& dismissed\\\hline
((3, 1, 1), 2, 3) &~~\dnv~$a_0>a_1,a_2\geq 0$ with $a_1 \neq a_2$~~& dismissed\\\hline
\end{tabular}
\end{center}

\subsection{Solving Equation 5.06}
The algorithm returns the following $\ssz{5.06}$:

\begin{center}
\noindent\makebox[\textwidth]{
\begin{tabular}{|c|c|c|}\hline
\textbf{~~element of $\ssz{5.06}$~~}   & \textbf{~~interpretation~~} & \textbf{~~conclusion~~}\\\hline\hline
((0, 1, 0, 0), 2, 2) &~~$a_1,a_2,a_3$ \anad~~& dismissed\\\hline
((3, 1, 1, 1), 2, 3) &~~$a_1,a_2,a_3$ \anad~~& dismissed\\\hline
\end{tabular}}
\end{center}

\subsection{Solving Equations 5.07 and 5.08}
The algorithm coincidentally returns the following $\ssz{5.07}=\ssz{5.08}$:

\begin{center}
\noindent\makebox[\textwidth]{
\begin{tabular}{|c|c|c|}\hline
\textbf{~~element of $\ssz{5.07}$ and $\ssz{5.08}$~~}   & \textbf{~~interpretation~~} & \textbf{~~conclusion~~}\\\hline\hline
((0, 0, 0), 3, 1) &~~\dnv~$a_1\neq a_2$~~& dismissed\\\hline
((3, 0, 0), 7, 0) &~~\dnv~$a_1\neq a_2$~~& dismissed\\\hline
\end{tabular}}
\end{center}

\subsection{Solving Equation 5.09}
The algorithm returns the following $\ssz{5.09}$:

\begin{center}
\begin{tabular}{|c|c|c|}\hline
\textbf{~~element of $\ssz{5.09}$~~}   & \textbf{~~interpretation~~} & \textbf{~~conclusion~~}\\\hline\hline
\textcolor{blue}{((2, 0), 4, 1)} & \textcolor{blue}{$f(x)=1575$} &\textcolor{blue}{$\checkmark$}\\\hline
\end{tabular}
\end{center}

According to solutions of Equations 5.08 and 5.09, 1575 is the only $f(x)$ such that $f^2(x)=175$.

\subsection{Solving Equation 5.10}
The algorithm returns the following $\ssz{5.10}$:

\begin{center}
\noindent\makebox[\textwidth]{
\begin{tabular}{|c|c|c|}\hline
\textbf{~~element of $\ssz{5.10}$~~}   & \textbf{~~interpretation~~} & \textbf{~~conclusion~~}\\\hline\hline
((0, 0, 0, 0), 3, 1) &~~$a_1,a_2,a_3$ \anad~~& dismissed\\\hline
((3, 0, 0, 0), 7, 0) &~~$a_1,a_2,a_3$ \anad~~& dismissed\\\hline
\end{tabular}}
\end{center}

\subsection{Solving Equation 5.11}
The algorithm returns the following $\ssz{5.11}$:

\begin{center}
\noindent\makebox[\textwidth]{
\begin{tabular}{|c|c|c|}\hline
\textbf{~~element of $\ssz{5.11}$~~}   & \textbf{~~interpretation~~} & \textbf{~~conclusion~~}\\\hline\hline
((2, 0, 0), 4, 1) &~~\dnv~$a_1\neq a_2$~~& dismissed\\\hline
\end{tabular}}
\end{center}

\subsection{Solving Equation 5.12}
The algorithm returns the following $\ssz{5.12}$:

\begin{center}
\begin{tabular}{|c|c|c|}\hline
\textbf{~~element of $\ssz{5.12}$~~}   & \textbf{~~interpretation~~} & \textbf{~~conclusion~~}\\\hline\hline
\textcolor{blue}{((1, 0), 5, 0)} & \textcolor{blue}{$f(x)=3375$} &\textcolor{blue}{$\checkmark$}\\\hline
\end{tabular}
\end{center}

According to solutions of Equations 5.10 to 5.14, 3375 is the only $f(x)$ such that $f^2(x)=315$.

\subsection{Solving Equation 5.15}
The algorithm returns the following $\ssz{5.15}$:

\begin{center}
\noindent\makebox[\textwidth]{
\begin{tabular}{|c|c|c|}\hline
\textbf{~~element of $\ssz{5.15}$~~}   & \textbf{~~interpretation~~} & \textbf{~~conclusion~~}\\\hline\hline
((1, 3, 0, 2, 3), 2, 5) &~~ $a_1,\ldots,a_4$ \anad~~& dismissed\\\hline
\end{tabular}}
\end{center}

\subsection{Solving Equation 5.18}
The algorithm returns the following $\ssz{5.18}$:

\begin{center}
\noindent\makebox[\textwidth]{
\begin{tabular}{|c|c|c|}\hline
\textbf{~~element of $\ssz{5.18}$~~}   & \textbf{~~interpretation~~} & \textbf{~~conclusion~~}\\\hline\hline
((1, 0, 1, 0), 3, 2) &~~$a_1,a_2,a_3$ \anad~~& dismissed\\\hline
((4, 1, 2, 2), 8, 1) &~~$a_1,a_2,a_3$ \anad~~& dismissed\\\hline
\end{tabular}}
\end{center}

\subsection{Solving Equation 5.19}
The algorithm returns the following $\ssz{5.19}$:

\begin{center}
\noindent\makebox[\textwidth]{
\begin{tabular}{|c|c|c|}\hline
\textbf{~~element of $\ssz{5.19}$~~}   & \textbf{~~interpretation~~} & \textbf{~~conclusion~~}\\\hline\hline
((1, 0, 1,) 2, 3) &~~\dnv~$a_0>a_1,a_2$~~& dismissed\\\hline
\end{tabular}}
\end{center}

\subsection{Solving Equation 5.20}
The algorithm returns the following $\ssz{5.20}$:

\begin{center}
\noindent\makebox[\textwidth]{
\begin{tabular}{|c|c|c|}\hline
\textbf{~~element of $\ssz{5.20}$~~}   & \textbf{~~interpretation~~} & \textbf{~~conclusion~~}\\\hline\hline
((1, 1, 0, 0), 3, 2) &~~$a_1,a_2,a_3$ \anad~~& dismissed\\\hline
\end{tabular}}
\end{center}

\subsection{Solving Equation 5.21}
The algorithm returns the following $\ssz{5.21}$:

\begin{center}
\begin{tabular}{|c|c|c|}\hline
\textbf{~~element of $\ssz{5.21}$~~}   & \textbf{~~interpretation~~} & \textbf{~~conclusion~~}\\\hline\hline
\textcolor{blue}{((3, 2, 1), 4, 3)} & \textcolor{blue}{$f(x)=77175$} &\textcolor{blue}{$\checkmark$}\\\hline
\end{tabular}
\end{center}

According to solutions of Equations 5.20 and 5.21, 77175 is the only $f(x)$ such that $f^2(x)=1715$.

\subsection{Solving Equation 5.22}
The algorithm returns the following $\ssz{5.22}$:

\begin{center}
\noindent\makebox[\textwidth]{
\begin{tabular}{|c|c|c|}\hline
\textbf{~~element of $\ssz{5.22}$~~}   & \textbf{~~interpretation~~} & \textbf{~~conclusion~~}\\\hline\hline
((1, 2, 2, 0, 3, 3), 2, 5) &~~$a_1,\ldots,a_5$ \anad~~& dismissed\\\hline
((1, 3, 3, 0, 3, 2), 2, 5) &~~$a_1,\ldots,a_5$ \anad~~& dismissed\\\hline
\end{tabular}}
\end{center}

\subsection{Solving Equation 5.23}
The algorithm returns the following $\ssz{5.23}$:

\begin{center}
\noindent\makebox[\textwidth]{
\begin{tabular}{|c|c|c|}\hline
\textbf{~~element of $\ssz{5.23}$~~}   & \textbf{~~interpretation~~} & \textbf{~~conclusion~~}\\\hline\hline
((1, 1, 1, 0, 0), 3, 2) &~~$a_1,\ldots,a_4$ \anad~~& dismissed\\\hline
((1, 3, 2, 2, 2), 3, 4) &~~$a_1,\ldots,a_4$ \anad~~& dismissed\\\hline
\textcolor{blue}{((4, 0, 3, 1, 2), 7, 2)} &~~\textcolor{blue}{$f(x)=59535$}~~&\textcolor{blue}{$\checkmark$}\\\hline
((4, 1, 2, 0, 0), 4, 3) &~~$a_1,\ldots,a_4$ \anad~~& dismissed\\\hline
((4, 2, 2, 1, 2), 8, 1) &~~$a_1,\ldots,a_4$ \anad~~& dismissed\\\hline
\end{tabular}}
\end{center}

According to solutions of Equations 5.22 and 5.23, 59535 is the only $f(x)$ such that $f^2(x)=3375$.

\subsection{Solving Equation 5.24}
The algorithm returns the following $\ssz{5.24}$:

\begin{center}
\noindent\makebox[\textwidth]{
\begin{tabular}{|c|c|c|}\hline
\textbf{~~element of $\ssz{5.24}$~~}   & \textbf{~~interpretation~~} & \textbf{~~conclusion~~}\\\hline\hline
((0, 1, 0, 0, 0, 0, 0, 0), \phantom{1}6, 0) &~~$a_1,\ldots,a_7$ \anad~~& dismissed\\\hline
((0, 1, 0, 0, 0, 0, 1, 0), \phantom{1}5, 1) &~~$a_1,\ldots,a_7$ \anad~~& dismissed\\\hline
((0, 1, 1, 1, 1, 0, 0, 0), \phantom{1}5, 1) &~~$a_1,\ldots,a_7$ \anad~~& dismissed\\\hline
((0, 1, 1, 1, 1, 1, 1, 1), \phantom{1}4, 2) &~~$a_1,\ldots,a_7$ \anad~~& dismissed\\\hline
((0, 2, 0, 0, 0, 0, 0, 0), \phantom{1}4, 2) &~~$a_1,\ldots,a_7$ \anad~~& dismissed\\\hline
((0, 2, 1, 1, 0, 0, 1, 1), \phantom{1}8, 0) &~~$a_1,\ldots,a_7$ \anad~~& dismissed\\\hline
((0, 2, 1, 1, 0, 0, 2, 0), \phantom{1}7, 1) &~~$a_1,\ldots,a_7$ \anad~~& dismissed\\\hline
((0, 3, 1, 1, 1, 1, 2, 2), 10, 0) &~~$a_1,\ldots,a_7$ \anad~~& dismissed\\\hline
((0, 3, 2, 2, 2, 1, 2, 1), 10, 0) &~~$a_1,\ldots,a_7$ \anad~~& dismissed\\\hline
((0, 3, 3, 1, 0, 0, 2, 0), \phantom{1}5, 3) &~~$a_1,\ldots,a_7$ \anad~~& dismissed\\\hline
((0, 3, 3, 2, 0, 0, 3, 2), \phantom{1}4, 4) &~~$a_1,\ldots,a_7$ \anad~~& dismissed\\\hline
((0, 4, 0, 0, 0, 0, 5, 0), 13, 1) &~~$a_1,\ldots,a_7$ \anad~~& dismissed\\\hline
((0, 5, 5, 5, 4, 0, 0, 0), 13, 1) &~~$a_1,\ldots,a_7$ \anad~~& dismissed\\\hline
\end{tabular}}
\end{center}

\subsection{Solving Equation 5.27}
The algorithm returns the following $\ssz{5.27}$:

\begin{center}
\noindent\makebox[\textwidth]{
\begin{tabular}{|c|c|c|}\hline
\textbf{~~element of $\ssz{5.27}$~~}   & \textbf{~~interpretation~~} & \textbf{~~conclusion~~}\\\hline\hline
((1, 1, 0, 0, 1, 0, 1), 2, 3) &~~$a_1,\ldots,a_6$ \anad~~& dismissed\\\hline
((1, 1, 1, 0, 1, 1, 0), 2, 3) &~~$a_1,\ldots,a_6$ \anad~~& dismissed\\\hline
((2, 1, 1, 0, 0, 0, 0), 3, 2) &~~$a_1,\ldots,a_6$ \anad~~& dismissed\\\hline
((4, 5, 5, 3, 2, 1, 3), 2, 7) &~~$a_1,\ldots,a_6$ \anad~~& dismissed\\\hline
((5, 3, 1, 1, 2, 1, 1), 6, 3) &~~$a_1,\ldots,a_6$ \anad~~& dismissed\\\hline
((5, 4, 1, 0, 2, 2, 1), 5, 4) &~~$a_1,\ldots,a_6$ \anad~~& dismissed\\\hline
((5, 5, 0, 0, 2, 0, 4), 7, 4) &~~$a_1,\ldots,a_6$ \anad~~& dismissed\\\hline
((5, 5, 4, 0, 4, 2, 0), 7, 4) &~~$a_1,\ldots,a_6$ \anad~~& dismissed\\\hline
\end{tabular}}
\end{center}





\subsection{Solving Equation 5.28}
The algorithm returns the following $\ssz{5.28}$:

\begin{center}
\noindent\makebox[\textwidth]{
\begin{tabular}{|c|c|c|}\hline
\textbf{~~element of $\ssz{5.28}$~~}   & \textbf{~~interpretation~~} & \textbf{~~conclusion~~}\\\hline\hline
((2, 0, 0, 0, 1, 0), 2, 3) &~~$a_1,\ldots,a_5$ \anad~~& dismissed\\\hline
((2, 1, 1, 1, 0, 0), 2, 3) &~~$a_1,\ldots,a_5$ \anad~~& dismissed\\\hline
((4, 1, 0, 0, 1, 1), 8, 1) &~~$a_1,\ldots,a_5$ \anad~~& dismissed\\\hline
((4, 2, 2, 2, 4, 2), 8, 3) &~~$a_1,\ldots,a_5$ \anad~~& dismissed\\\hline
((4, 4, 4, 4, 2, 2), 8, 3) &~~$a_1,\ldots,a_5$ \anad~~& dismissed\\\hline
\end{tabular}}
\end{center}

\subsection{Solving Equation 5.29}
The algorithm returns the following $\ssz{5.29}$:

\begin{center}
\noindent\makebox[\textwidth]{
\begin{tabular}{|c|c|c|}\hline
\textbf{~~element of $\ssz{5.29}$~~}   & \textbf{~~interpretation~~} & \textbf{~~conclusion~~}\\\hline\hline
((0, 0, 0, 0, 0), 3, 2) &~~$a_1,\ldots,a_4$ \anad~~& dismissed\\\hline
((0, 1, 1, 1, 2), 3, 4) &~~$a_1,\ldots,a_4$ \anad~~& dismissed\\\hline
((0, 2, 1, 2, 1), 3, 4) &~~$a_1,\ldots,a_4$ \anad~~& dismissed\\\hline
((3, 1, 0, 1, 1), 9, 0) &~~$a_1,\ldots,a_4$ \anad~~& dismissed\\\hline
((3, 1, 1, 0, 0), 5, 2) &~~$a_1,\ldots,a_4$ \anad~~& dismissed\\\hline
\end{tabular}}
\end{center}

\subsection{Solving Equation 5.31}
The algorithm returns the following $\ssz{5.31}$:

\begin{center}
\noindent\makebox[\textwidth]{
\begin{tabular}{|c|c|c|}\hline
\textbf{~~element of $\ssz{5.31}$~~}   & \textbf{~~interpretation~~} & \textbf{~~conclusion~~}\\\hline\hline
((0, 2, 1, 1, 2, 1), 2, 4) &~~$a_1,\ldots,a_5$ \anad~~& dismissed\\\hline
((1, 0, 0, 0, 0, 0), 4, 1) &~~$a_1,\ldots,a_5$ \anad~~& dismissed\\\hline
((1, 0, 1, 1, 1, 1), 6, 1) &~~$a_1,\ldots,a_5$ \anad~~& dismissed\\\hline
((1, 0, 2, 1, 2, 1), 4, 3) &~~$a_1,\ldots,a_5$ \anad~~& dismissed\\\hline
((1, 0, 5, 2, 4, 1), 6, 5) &~~\dnv~ $a_0>a_1,\ldots,a_5\geq 0$~~& dismissed\\\hline
((1, 1, 0, 0, 1, 0), 7, 0) &~~$a_1,\ldots,a_5$ \anad~~& dismissed\\\hline
((1, 2, 0, 0, 2, 1), 9, 0) &~~$a_1,\ldots,a_5$ \anad~~& dismissed\\\hline
((1, 2, 1, 0, 0, 0), 6, 1) &~~$a_1,\ldots,a_5$ \anad~~& dismissed\\\hline
((3, 2, 0, 0, 3, 1), 2, 5) &~~$a_1,\ldots,a_5$ \anad~~& dismissed\\\hline
\end{tabular}}
\end{center}

\subsection{Solving Equation 5.32}
The algorithm returns the following $\ssz{5.32}$:

\begin{center}
\noindent\makebox[\textwidth]{
\begin{tabular}{|c|c|c|}\hline
\textbf{~~element of $\ssz{5.32}$~~}   & \textbf{~~interpretation~~} & \textbf{~~conclusion~~}\\\hline\hline
((0, 2, 0, 3, 3), 5, 4) &~~$a_1,\ldots,a_4$ \anad~~& dismissed\\\hline
((1, 3, 2, 2, 2), 2, 5) &~~$a_1,\ldots,a_4$ \anad~~& dismissed\\\hline
\end{tabular}}
\end{center}

\subsection{Solving Equation 5.35}
The algorithm returns the following $\ssz{5.35}$:

\begin{center}
\noindent\makebox[\textwidth]{
\begin{tabular}{|c|c|c|}\hline
\textbf{~~element of $\ssz{5.35}$~~}   & \textbf{~~interpretation~~} & \textbf{~~conclusion~~}\\\hline\hline
((1, 0, 0, 1, 1, 1, 0), 2, 3) &~~$a_1,\ldots,a_6$ \anad~~& dismissed\\\hline
((1, 1, 1, 0, 1, 1, 0), 2, 3) &~~$a_1,\ldots,a_6$ \anad~~& dismissed\\\hline
((2, 0, 0, 1, 0, 0, 0), 3, 2) &~~$a_1,\ldots,a_6$ \anad~~& dismissed\\\hline
((2, 1, 1, 0, 0, 0, 0), 3, 2) &~~$a_1,\ldots,a_6$ \anad~~& dismissed\\\hline
((4, 3, 3, 5, 3, 2, 1), 2, 7) &~~$a_1,\ldots,a_6$ \anad~~& dismissed\\\hline
((4, 5, 5, 3, 3, 2, 1), 2, 7) &~~$a_1,\ldots,a_6$ \anad~~& dismissed\\\hline
((5, 2, 2, 3, 4, 1, 1), 4, 5) &~~$a_1,\ldots,a_6$ \anad~~& dismissed\\\hline
((5, 3, 1, 1, 2, 1, 1), 6, 3) &~~$a_1,\ldots,a_6$ \anad~~& dismissed\\\hline
((5, 3, 2, 2, 1, 1, 1), 6, 3) &~~$a_1,\ldots,a_6$ \anad~~& dismissed\\\hline
((5, 3, 3, 2, 4, 1, 1), 4, 5) &~~$a_1,\ldots,a_6$ \anad~~& dismissed\\\hline
((5, 4, 0, 1, 2, 2, 1), 5, 4) &~~$a_1,\ldots,a_6$ \anad~~& dismissed\\\hline
((5, 5, 4, 0, 4, 2, 0), 7, 4) &~~$a_1,\ldots,a_6$ \anad~~& dismissed\\\hline
\end{tabular}}
\end{center}






\subsection{Solving Equation 5.36}

The algorithm returns the following $\ssz{5.36}$:

\begin{center}
\noindent\makebox[\textwidth]{
\begin{tabular}{|c|c|c|}\hline
\textbf{~~element of $\ssz{5.36}$~~}   & \textbf{~~interpretation~~} & \textbf{~~conclusion~~}\\\hline\hline
((2, 0, 0, 0, 1, 0), 2, 3) &~~$a_1,\ldots,a_5$ \anad~~& dismissed\\\hline
((2, 1, 0, 1, 0, 0), 2, 3) &~~$a_1,\ldots,a_5$ \anad~~& dismissed\\\hline
((4, 0, 0, 1, 1, 1), 8, 1) &~~$a_1,\ldots,a_5$ \anad~~& dismissed\\\hline
((4, 1, 1, 0, 1, 1), 8, 1) &~~$a_1,\ldots,a_5$ \anad~~& dismissed\\\hline
((4, 2, 2, 2, 4, 2), 8, 3) &~~$a_1,\ldots,a_5$ \anad~~& dismissed\\\hline
((4, 4, 2, 4, 2, 2), 8, 3) &~~$a_1,\ldots,a_5$ \anad~~& dismissed\\\hline
\end{tabular}}
\end{center}

\subsection{Solving Equation 5.37}

The algorithm returns the following $\ssz{5.37}$:

\begin{center}
\noindent\makebox[\textwidth]{
\begin{tabular}{|c|c|c|}\hline
\textbf{~~element of $\ssz{5.37}$~~}   & \textbf{~~interpretation~~} & \textbf{~~conclusion~~}\\\hline\hline
((0, 0, 0, 0, 0), 3, 2) &~~$a_1,\ldots,a_4$ \anad~~& dismissed\\\hline
((0, 2, 1, 2, 1), 3, 4) &~~$a_1,\ldots,a_4$ \anad~~& dismissed\\\hline
((3, 0, 1, 0, 0), 5, 2) &~~$a_1,\ldots,a_4$ \anad~~& dismissed\\\hline
((3, 0, 1, 1, 1), 9, 0) &~~$a_1,\ldots,a_4$ \anad~~& dismissed\\\hline
\end{tabular}}
\end{center}

\subsection{Solving Equation 5.38}

The algorithm returns the following $\ssz{5.38}$:

\begin{center}
\noindent\makebox[\textwidth]{
\begin{tabular}{|c|c|c|}\hline
\textbf{~~element of $\ssz{5.38}$~~}   & \textbf{~~interpretation~~} & \textbf{~~conclusion~~}\\\hline\hline
((0, 2, 2, 1, 1, 1), \phantom{1}2, 4) &~~$a_1,\ldots,a_5$ \anad~~& dismissed\\\hline
((1, 0, 0, 0, 0, 0), \phantom{1}4, 1) &~~$a_1,\ldots,a_5$ \anad~~& dismissed\\\hline
((1, 1, 1, 0, 0, 0), \phantom{1}7, 0) &~~$a_1,\ldots,a_5$ \anad~~& dismissed\\\hline
((1, 1, 2, 0, 0, 0), \phantom{1}5, 2) &~~$a_1,\ldots,a_5$ \anad~~& dismissed\\\hline
((1, 1, 2, 2, 2, 3), 11, 0) &~~$a_1,\ldots,a_5$ \anad~~& dismissed\\\hline
((1, 2, 1, 1, 1, 1), \phantom{1}5, 2) &~~$a_1,\ldots,a_5$ \anad~~& dismissed\\\hline
((1, 2, 2, 0, 0, 1), \phantom{1}9, 0) &~~$a_1,\ldots,a_5$ \anad~~& dismissed\\\hline
\end{tabular}}
\end{center}

\subsection{Solving Equation 5.39}
The algorithm returns the following $\ssz{5.39}$:

\begin{center}
\noindent\makebox[\textwidth]{
\begin{tabular}{|c|c|c|}\hline
\textbf{~~element of $\ssz{5.39}$~~}   & \textbf{~~interpretation~~} & \textbf{~~conclusion~~}\\\hline\hline
((0, 1, 0, 0, 0), \phantom{1}7, 0) &~~$a_1,\ldots,a_4$ \anad~~& dismissed\\\hline
((0, 2, 1, 0, 1), \phantom{1}9, 0) &~~$a_1,\ldots,a_4$ \anad~~& dismissed\\\hline
((0, 3, 1, 1, 2), 11, 0) &~~$a_1,\ldots,a_4$ \anad~~& dismissed\\\hline
((1, 3, 1, 1, 1), \phantom{1}2, 5) &~~$a_1,\ldots,a_4$ \anad~~& dismissed\\\hline
\end{tabular}}
\end{center}

The full Python code of the solving algorithm is available from the authors.

\section{Appendix: Graphs $B_2$ and $B_4$}
\label{app-U2-U4}
$B_2=(U_2,F_2)$ =\\
\begin{tikzpicture}
\tikzstyle{level 1}=[sibling distance=8.5em,level distance = 6em, edge from parent/.style = {draw, -latex},every node/.style       = {font=\footnotesize},rotate=90,minimum width=3em]
  \tikzstyle{level 2}=[sibling distance=5.6em,level distance = 6em, edge from parent/.style = {draw, -latex},every node/.style       = {font=\footnotesize},minimum width=3.5em]
    \tikzstyle{level 3}=[sibling distance=2.6em,level distance = 7em, edge from parent/.style = {draw, -latex},every node/.style       = {font=\footnotesize},minimum width=5.9em]
      \tikzstyle{level 4}=[sibling distance=1.7em,level distance = 8em, edge from parent/.style = {draw, -latex},every node/.style       = {font=\footnotesize},minimum width=5em]
            \tikzstyle{level 5}=[sibling distance=2em,level distance = 8em, edge from parent/.style = {draw, -latex},every node/.style       = {font=\footnotesize}]
  [
    grow                    = right,
    level distance          = 10em,
    edge from parent/.style = {draw, -latex},
    every node/.style       = {font=\footnotesize},
    sloped
  ]
  \node [node] {2}
    child { node [node] {112}
          child { node [node] {1728} 
        child { node [node] {11239424} 
        }
        child { node [node] {321489} 
        }
        child { node [node] {314928} 
        }
        child { node [node] {268912} 
        }
      }
      child { node [node] {2187} 
      }
    }
    child { node [node] {21}
    }
    child { node [node] {12}
      child { node [node] {13122} 
      }
      child { node [node] {216} 
        child { node [node] {1229312} 
        }
        child { node [node] {61236} 
          child { node [node] {\tiny{$\dtcs{16}{1}{0}{2}$}}}
        }
        child { node [node] {33614} 
        }
        child { node [node] {14336} 
        }
      }
      child { node [node] {162} 
        child { node [node] {93312} 
          child { node [node] {\tiny{$\dtcs{26}{3}{0}{0}$}} }
          child { node [node] {\tiny{$\dtcs{8}{3}{0}{5}$}} }
        }
      }
        child { node [node] {1134} 
      }
      child { node [node] {126} 
        child { node [node] {1792} 
          child { node [node] {7112448} }
          child { node [node] {1741824}
            child { node [node] {\tiny{$\dtcs{6}{0}{0}{8}$}
            } }
          }
          child { node [node] {1411788} }
          child { node [node] {8748} }
        }
          child { node [node] {729} 
        }
        child { node [node] {972} 
          child { node [node] {3111696} }
          child { node [node] {393216} }
        }
      }
    }

    edge [loop above] ()
    ;
\end{tikzpicture}

$B_4=(U_4,F_4)$ =\\
\begin{tikzpicture}
\tikzstyle{level 1}=[sibling distance=8.5em,level distance = 6em, edge from parent/.style = {draw, -latex},every node/.style       = {font=\footnotesize},rotate=90]
  \tikzstyle{level 2}=[sibling distance=5.6em,level distance = 6em, edge from parent/.style = {draw, -latex},every node/.style       = {font=\footnotesize}]
    \tikzstyle{level 3}=[sibling distance=2.8em,level distance = 7em, edge from parent/.style = {draw, -latex},every node/.style       = {font=\footnotesize},minimum width=5em]
  [
    grow                    = right,
    sibling distance        = 4em,
    level distance          = 6em,
    edge from parent/.style = {draw, -latex},
    every node/.style       = {font=\footnotesize},
    sloped
  ]
  \node [node] {4}
    child { node [node] {14}
      child { node [node] {72} 
          child { node [node] {1161216} 
              child { node [node] {$\dtcs{23}{7}{0}{1}$} }
          }
          child { node [node] {294} }
          child { node [node] {189} }
          child { node [node] {98} }
      }
      child { node [node] {27} }
    }
    edge [loop above] ()
    ;
\end{tikzpicture}

\end{document}